\documentclass[pdflatex]{sn-jnl}

\usepackage{graphicx}%
\usepackage{multirow}%
\usepackage{amsmath,amssymb,amsfonts}%
\usepackage{amsthm}%
\usepackage{mathrsfs}%
\usepackage[title]{appendix}%
\usepackage{xcolor}%
\usepackage{textcomp}%
\usepackage{manyfoot}%
\usepackage{booktabs}%
\usepackage{algorithm}%
\usepackage{algorithmicx}%
\usepackage{algpseudocode}%
\usepackage{listings}%

\usepackage{hyperref} 
\usepackage{pgfplots} 
\usepackage[numbers, sort&compress]{natbib}
\pgfplotsset{compat=1.18} 
\usetikzlibrary{patterns} 
\usetikzlibrary{patterns, arrows.meta}

\theoremstyle{thmstyleone}%
\newtheorem{theorem}{Theorem}[section] 
\newtheorem{lemma}[theorem]{Lemma}

\theoremstyle{thmstyletwo}%
\newtheorem{remark}[theorem]{Remark}

\theoremstyle{thmstylethree}%
\newtheorem{definition}[theorem]{Definition}

\begin{document}

\title[Shinbrot Type Criteria for Energy Conservation]{Shinbrot Type Criteria for Energy Conservation of the Compressible Navier-Stokes Equations}


\author[1]{\fnm{Ruxuan} \sur{Chen}}\email{rc1c24@soton.ac.uk}

\author*[2]{\fnm{Qi} \sur{Zhang}}\email{qizhangmath@hrbeu.edu.cn}

\author[2]{\fnm{Zhikang} \sur{Zhang}}\email{zhangzhikang23@163.com}

\author[2]{\fnm{Xiongbo} \sur{Zheng}}\email{zhengxiongbo@hrbeu.edu.cn}

\affil[1]{\orgdiv{Southampton Ocean Engineering Joint Institute at HEU}, \orgname{Harbin Engineering University}, \orgaddress{\street{Nantong Street}, \city{Harbin}, \postcode{
			150001}, \state{Heilongjiang Province}, \country{P. R. China}}}

\affil[2]{\orgdiv{College of Mathematical Sciences}, \orgname{Harbin Engineering University}, \orgaddress{\street{Nantong Street}, \city{Harbin}, \postcode{
			150001}, \state{Heilongjiang Province}, \country{P. R. China}}}


\abstract{We prove that weak solutions to the compressible Navier–Stokes equations satisfy the energy equality under a Shinbrot-type regularity criterion. Our method applies to the fluids with both constant and degenerate viscosity and relies on a novel weak-type commutator estimate. These criterion are strictly weaker than those required in prior works \cite[Arch. Ration. Mech. Anal., 225 (2017)]{yu2017energy} and \cite[SIAM J. Math. Anal. 52 (2020)]{chen2020energy}.
	}

\keywords{Compressible Navier-Stokes, Energy conservation, Shinbrot criteria, Weak solutions\\
\textbf{Mathematics Subject Classification} 35Q30 $\cdot$ 35D30 $\cdot$ 76N10}

\maketitle

\section{Introduction}

The compressible Navier-Stokes equations are a fundamental model in fluid dynamics, describing the motion of a viscous compressible fluid. In this paper, we mainly study the energy conservation of the following system in a bounded domain $\Omega \subset \mathbb{R}^N$ ($N = 2, 3$):
\begin{equation}\label{eq:comNS}
    \begin{split}
        (\rho u)_t + \text{div}(\rho u \otimes u) - \mu \Delta u - (\mu+\lambda) \nabla \text{div} \, u + \nabla P &= 0,  \\
        \rho_t + \text{div}(\rho u) &= 0, 
    \end{split}
\end{equation}
with initial data
\begin{equation}
    \rho|_{t=0} = \rho_0(x), \quad \rho u|_{t=0} = \rho_0 u_0, \label{con:initial}
\end{equation}
where $P = \rho^\gamma$, $\gamma > 1$, is the pressure, $\rho$ is the density, $u$ is the velocity, and the viscosity coefficients satisfy $\mu > 0$, $2\mu + N\lambda \geq 0$. Here we define $u_0 = 0$ on the set $\{x|\rho_0(x) = 0\}$. However, the weak solutions of these equations do not, in general, satisfy the energy equalities, i.e.
\begin{equation}
    \begin{aligned}
    	&\int_{\Omega}\left(\frac{1}{2} \rho_0\left|u_0\right|^2+\frac{\rho_0^\gamma}{\gamma-1}\right) \mathrm{d} x-\int_{\Omega}\left(\frac{1}{2} \rho|u|^2+\frac{\rho^\gamma}{\gamma-1}\right) \mathrm{d} x\\
    	=&\int_0^T \int_{\Omega}\left( \mu|\nabla u|^2+(\mu+\lambda)|\operatorname{div} u|^2 \right)\mathrm{d} x \mathrm{d} t. \label{eq:energy_comNS}
    \end{aligned}
\end{equation}
This phenomenon, along with incompressible cases, has drawn attention and sparked interest in exploring the relationship between energy conservation and the regularity of weak solutions \cite{buckmaster2016dissipative,constantin1994onsager,feireisl2017regularity,isett2018proof}.

In fact, this subject originated from the famous Onsager’s conjecture for incompressible Euler flows \cite{onsager1949statistical}. This conjecture states that 1/3 should be the threshold of Hölder continuous exponent for energy conservation. Consequently, \cite{constantin1994onsager,eyink1994energy,cheskidov2008energy} obtained the energy equality with Besov regularity greater than 1/3 and \cite{buckmaster2015anomalous,buckmaster2016dissipative,isett2018proof} constructed the non-conservation solutions with Hölder continuity smaller than 1/3. These significant advances almost reached the threshold.

In the context of the Navier–Stokes equations, the pioneering studies were done by Lions \cite{lions1960regularite} and Serrin \cite{serrininitial}. Lions proved that a weak solution $u$ to a incompressible fluids conserves its energy provided $u \in L^{4}_{t,x}$ and Serrin gave a dimension-dependent condition
\begin{equation}\label{con:serrin}
    u \in L^{p}_{t}L^{q}_{x}\quad\mbox{for}\quad \frac{2}{p}+\frac{N}{q}\leq 1,\quad q > N.
\end{equation}
Later, Shinbrot removed the dimensional dependence in \cite{shinbrot1974energy} and proved the same conclusion if
\begin{equation}\label{con:shinbrot}
    \frac{2}{p}+\frac{2}{q}\leq 1,\quad q\geq 4,
\end{equation}
which improved the previous work. Meanwhile, for $3\leq q<4$, the following criteria
\begin{equation}\label{con:shinbrot2}
    \frac{1}{p}+\frac{3}{q}\leq 1
\end{equation}
could also yield energy conservation due to the embedding relationship \cite{da2020shinbrot}. Furthermore, a range of new types of conditions have been obtained recently, see \cite{drivas2019onsager,cheskidov2020energy,leslie2018conditions}.

As for the compressible systems, corresponding results are more recent and the analysis becomes inherently more complex. Within the framework of Onsager's theory \cite{onsager1949statistical}, Drivas and Eyink \cite{drivas2018onsager1,eyink2018cascades} adapted methods from the incompressible fluids \cite{constantin1994onsager,duchon2000inertial,eyink1994energy} to the compressible Euler equations and obtain necessary conditions for the energy dissipative anomalies of turbulent solutions. Meanwhile, sufficient conditions for energy conservation in terms of Besov regularity for the compressible Euler fluids with a $C^{\alpha}$ ($1<\alpha\leq2$) pressure law $p(\rho)$ were later established by Feireisl et al. \cite{feireisl2017regularity} and Akramov et al. \cite{akramov2020energy}.

Regarding the energy equality of compressible Navier-Stokes \eqref{eq:comNS}, the pioneering breakthrough was made by Yu \cite{yu2017energy}. He proved the equalities \eqref{eq:energy_comNS} and \eqref{eq:energy_comNSdeg} with the conditions that the weak solution $(\rho,u)$ satisfies
\begin{equation}\label{con:Yu}
    u \in L^{p}_{t}(0,T;L^{q}_{x}(\mathbb{T}^{N}))\quad\mbox{for}\quad \frac{1}{p}+\frac{1}{q}\leq \frac{5}{12},\quad q\geq 6
\end{equation}
and
\begin{equation}\label{con:genhao rou}
    \sqrt{\rho} \in L^\infty (0, T; H^1(\mathbb{T}^{N})).
\end{equation}
Later, Chen et al. \cite{chen2020energy} treated the  boundary effect to investigate energy conservation on a more general bounded domain with no-slip boundary condition, when
\begin{equation}\label{con:Chen et al.}
    u \in L^{p}_{t}(0,T;L^{q}_{x}(\Omega))\quad\mbox{for}\quad p\geq4,\quad q\geq 6,
\end{equation}
which is just the end point of \eqref{con:Yu}. Particularly, Ye et al. \cite{ye2022energy} have reached $L^{4}L^{4}$ criteria for the compressible Navier-Stokes equations, while they provided that $\nabla\sqrt{\rho}\in L^{4}L^{4}$ instead of $L^{\infty}L^{2}$. However, the $L^{\infty}H^{1}$ regularity of $\sqrt{\rho}$ is of independent significance, as the Bresch-Desjardins entropy for \eqref{eq:comNSdeg} implies such an estimate \cite{vasseur2016existence}. In conclusion, a gap persists between these results and the Shinbrot criteria, due to the complexity of the compressible system.

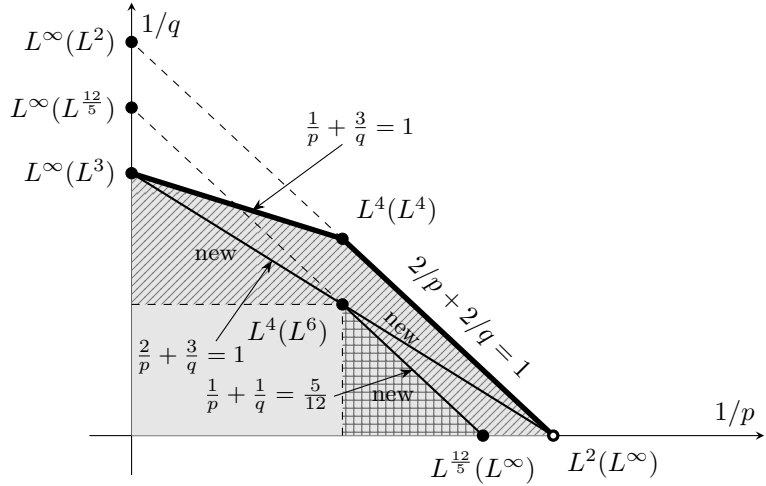
\begin{figure}[htbp]
  \centering 
  \begin{tikzpicture}
    \begin{axis}[
        width=0.8\textwidth, 
        height=0.6\textwidth, 
        xlabel={$1/p$},
        xlabel style={
        at={(rel axis cs:0,1)},
        anchor=west, 
        xshift=15pt, 
        yshift=-15pt 
    },
        ylabel={$1/q$},
        ylabel style={
        at={(0,0.55)},
        anchor=south, 
        xshift=10pt, 
        yshift=10pt, 
        rotate=0 
    },
        xmin=-0.05, xmax=0.75, ymin=-0.05, ymax=0.55,
        axis lines=middle,
        xtick={0},
        ytick={0},
        grid=major,
        legend pos=north east,
        clip=false,
    ]
      \fill[gray!20] (axis cs: 0,0) -- (axis cs: 0.5,0) -- (axis cs: 0.25,0.25) -- (axis cs: 0,1/3) -- cycle;
      \fill[pattern=north east lines, pattern color=black!50] (axis cs: 5/12,0) -- (axis cs: 0.25,1/6) -- (axis cs: 0,1/6) -- (axis cs: 0,1/3) -- (axis cs: 0.25,0.25) -- (axis cs: 0.5,0) -- cycle;
      \fill[pattern=grid, pattern color=black!60] (axis cs: 5/12,0) -- (axis cs: 0.25,1/6) -- (axis cs: 0.25,0) -- cycle;

      \node[draw, circle, fill=black, inner sep=1.5pt, label=below right:{$L^2(L^\infty)$}] at (axis cs: 0.5,0) {};
      \node[draw, circle, fill=black, inner sep=1.5pt, label=below left:{$L^4(L^6)$}] at (axis cs: 0.25,1/6) {};
      \node[draw, circle, fill=black, inner sep=1.5pt, label=above right:{$L^4(L^4)$}] at (axis cs: 0.25,0.25) {};
      \node[draw, circle, fill=black, inner sep=1.5pt, label=below:{$L^{\frac{12}{5}}(L^\infty)$}] at (axis cs: 5/12,0) {};
      \node[draw, circle, fill=black, inner sep=1.5pt, label=left:{$L^{\infty}(L^3)$}] at (axis cs: 0,1/3) {}; 
      \node[draw, circle, fill=black, inner sep=1.5pt, label=left:{$L^{\infty}(L^\frac{12}{5})$}] at (axis cs: 0,5/12) {};
      \node[draw, circle, fill=black, inner sep=1.5pt, label=left:{$L^{\infty}(L^2)$}] at (axis cs: 0,0.5) {};
      
      \draw[thick] (axis cs: 0,1/3) -- (axis cs: 0.5,0);
      \draw[line width=2pt] (axis cs: 0,1/3) -- (axis cs: 0.25,0.25);
      \draw[thick] (axis cs: 0.25,1/6) -- (axis cs: 5/12,0);
      \draw[line width=2pt] (axis cs: 0.25,0.25) -- (axis cs: 0.5,0);
      \draw[dashed] (axis cs: 0.25,0.25) -- (axis cs: 0,0.5);
      \draw[dashed] (axis cs: 0.25,1/6) -- (axis cs: 0,5/12);
      \draw[dashed] (axis cs: 0,1/6) -- (axis cs: 0.25,1/6);
      \draw[dashed] (axis cs: 0.25,0) -- (axis cs: 0.25,1/6);
      \draw[-Stealth] (0.1,0.12) -- (1/6,2/9);
      \draw[-Stealth] (0.24,0.05) -- (1/3,1/12);
      \draw[-Stealth] (0.25,0.37) -- (1/7,2/7);

      \node[rotate=-45, font=\normalsize] at (axis cs: 0.4, 0.15) {$2/p + 2/q = 1$};
      \node[rotate=0, font=\small] at (axis cs: 0.07, 0.1) {$\frac{2}{p} + \frac{3}{q} = 1$};
      \node[rotate=0, font=\small] at (axis cs: 0.16, 0.05) {$\frac{1}{p} + \frac{1}{q} = \frac{5}{12}$};
      \node[rotate=0, font=\small] at (axis cs: 0.27, 0.39) {$\frac{1}{p} + \frac{3}{q} = 1$};
      \node[rotate=-45, font=\small] at (axis cs: 0.32, 0.135) {new};
      \node[rotate=0, font=\small] at (axis cs: 0.1, 0.23) {new};
      \node[rotate=0, font=\small] at (axis cs: 0.31, 0.05) {new};
      \draw[fill=white, line width=0.8pt] (axis cs: 0.5,0) circle (1.5pt);

    \end{axis}
  \end{tikzpicture}

  \caption{The shaded region illustrates the exponent ranges covered by Theorem \ref{theoremcomNS}.The dashed and grid subregions represent new ranges of exponents not obtained before in the literature, where the grid area only corresponds to bounded domain with no-slip boundary and the dashed area corresponds to both bounded domain and torus.} 
  \label{fig:embedding-diagram} 
\end{figure}

In fact, temporal derivative term $\partial_{t}(\rho u)$ is the main challenges in filling this gap. It requests a spatiotemporal mollification of the equation, not a pure spatial mollification of the incompressible system. Then, new errors associated to temporal mollification have emerged. In the proofs of \cite{chen2020energy,ye2022energy,yu2017energy}, the energy flux from $\partial_{t}(\rho u)$ was handled by a commutator estimate providing the integrability of $\partial_{t}\rho$. Particularly, it is the mass equation which allows us to 'transform' the spatial regularity of $u$ into the temporal regularity of $\rho$. However, this strategy meets some restriction. One need to assume that the velocity $u$ complies with the Serrin type criteria at least, not the Lions or Shinbrot type criteria.

\subsection{Methodology}

The present paper aims to bridge the gap between known research (\eqref{con:Yu} and \eqref{con:Chen et al.}) and the Shinbrot type criteria (\eqref{con:shinbrot} and \eqref{con:shinbrot2}) on a periodic domain $\mathbb{T}^{d}$ or a bounded domain with $C^{1}$ boundary. For this purpose, we shall reorganize the framework and introduce a new mollification and estimate. These approaches allow us to deal with the energy flux caused by the nonlinear terms $(\rho u)_t$ under lower integrability conditions. In the following paragraphs, we briefly describe the ideas of our method.

\emph{Framework and mollification.} The proof framework developed in this paper consists of two main steps: first, establishing the local energy equality, and then extending it to a global version. In the first step, we derive the local energy equality using the specific test function $(\varphi u_{\varepsilon}^{\varepsilon})_{\varepsilon}^{\varepsilon}$, introduced in Section 2, and passing to the limit as $\varepsilon \to 0$. By structuring the mollifier in this manner, we can separate errors present in different directions. In particular, we establish the vanishing of the temporal error via our weak-type temporal commutator estimate, which improves upon the classical treatment. Meanwhile, the other terms, such as $\text{div}(\rho u \otimes u)$, are rigorously re-examined in this step to ensure convergence under the weaker assumptions. In the second step, the result is extended to the whole domain by setting $\varphi=\psi_{\tau}\phi_{\delta}$ (with $\varphi=\psi_{\tau}$ for the torus $\mathbb{T}^{d}$) and letting $\tau, \delta \to 0$. The spatial cut-off function $\phi_{\delta}$ is constructed by the approaches used in \cite{chen2020energy} to control the regularity of the solution near the boundary. Particularly, the presence of a solid boundary complicates the dissipative mechanisms, as the behavior of the solution near the wall can differ significantly from its behavior in the interior. For further discussion on boundary effects in energy conservation contexts, we refer the reader to the related literature \cite{bardos2019onsager,bardos2019extension,drivas2018onsager}.

\emph{Weak-type commutator estimate.} Our main challenge is to justify the convergence of the commutator
$$
\partial_{t}(\rho u^{\varepsilon})_{\varepsilon}-\partial_{t}(\rho u_{\varepsilon}^{\varepsilon})
$$
under the Shinbrot-type condition on velocity. Compared with the previous work, these criteria entail a corresponding loss of regularity of $\partial_{t}\rho$, which is the key condition of the commutator estimate used in them. Since one usually employs the continuity equation:
$$
\partial_{t}\rho = - \text{div}(\rho u) = - (\rho \text{div} u + \nabla\rho\cdot u),
$$
to obtain integrability of $\partial_{t}\rho$, it may be difficult to derive the necessary $L_{t}^{p}L_{x}^{q}$ estimate of $\partial_{t}\rho$ under the assumption of the present paper. To overcome this obstacle, we prove Lemma \ref{commutator}, in which the convergence is established directly without establishing any $L^{p}$-type commutator estimate. Lemma \ref{commutator}, together with the viscous properties of the fluid, allows us to show that the error term vanishes even when $\partial_{t}\rho$ is only a distribution, which is ensured by the mass conservation.

\subsection{Main results}

Our results consist of the energy conservation criteria of two kinds of compressible Navier--Stokes systems: constant viscosities and degenerate viscosity. Here, we present our main results regarding the energy conservation for weak solutions to the equations with constant viscosities, whereas the remaining content, pertaining to degenerate viscosity cases, is outlined in the final section (Section \ref{sec:deg}). Throughout the present paper, let $\Omega$ be either the torus $\mathbb{T}^{N}$ ($N=2,3$) or an open, bounded domain in $\mathbb{R}^N$ with $C^1$ boundary $\partial \Omega$ with $u=0$ on $\partial\Omega$.

We first present the result for the Navier-Stokes equations \eqref{eq:comNS} with constant viscosity, where the presence of vacuum is allowed.

\begin{theorem}\label{theoremcomNS}
    Let $(\rho, u)$ be a weak solution of \eqref{eq:comNS} in the sense of Definition \ref{defcomNS}. Assume that the density satisfies
    \begin{equation}\label{con:thcomNSrhoover0}
         0 \leq \rho(t,x) \leq \bar{\rho} < \infty, \quad \text{and} \quad \nabla \sqrt{\rho} \in L^{\infty}\left(0, T ; L^{\frac{3}{2}}(\Omega)\right),
    \end{equation}
    and the initial velocity satisfies
    \begin{equation}\label{con:initial_1}
        u_0 \in L^{\frac{6N}{6-N}}(\Omega).
    \end{equation}
    If the velocity satisfies
    \begin{equation}\label{con:shinbrot_cond}
        u \in L^p\left(0, T ; L^q(\Omega)\right) \quad \text{with} \quad \begin{cases}
            \frac{1}{p}+\frac{3}{q} \leq 1, \quad\mbox{if} \quad 3\leq q < 4,\\
            \frac{2}{p}+\frac{2}{q} \leq 1, \quad\mbox{if} \quad 4\leq q <\infty,
        \end{cases}
    \end{equation}
    then the energy equality \eqref{eq:energy_comNS} holds for any $t \in[0, T]$.
\end{theorem}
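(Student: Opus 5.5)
We will prove Theorem \ref{theoremcomNS} in two stages: first a local energy equality in the interior, then a global one obtained by removing the cut-offs.

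\textbf{Local energy equality.} Fix a test function $\varphi(t,x)=\psi(t)\phi(x)$ compactly supported in $(0,T)\times\Omega$. We test the momentum equation with $(\varphi u_\varepsilon^\varepsilon)_\varepsilon^\varepsilon$, the space--time mollification of Section 2, so that the spatial and temporal errors separate. Moving the outer mollifier onto the equation produces the following terms:
\begin{itemize}
\item the time term $\int \partial_t(\rho u)^\varepsilon_\varepsilon\cdot\varphi u^\varepsilon_\varepsilon$;
\item the convection term $\int \mathrm{div}(\rho u\otimes u)^\varepsilon_\varepsilon\cdot\varphi u^\varepsilon_\varepsilon$;
\item the viscous terms and the pressure term.
\end{itemize}

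For the time term, we write $\partial_t(\rho u)^\varepsilon_\varepsilon=\partial_t(\rho u^\varepsilon_\varepsilon)+[\partial_t(\rho u)^\varepsilon_\varepsilon-\partial_t(\rho u^\varepsilon_\varepsilon)]$. The first piece, combined with the mass equation tested by $\tfrac12\varphi|u^\varepsilon_\varepsilon|^2$, gives $-\tfrac12\int\rho|u^\varepsilon_\varepsilon|^2\partial_t\varphi$ plus a transport term $\tfrac12\int\rho u\cdot\nabla(\varphi|u_\varepsilon^\varepsilon|^2)$.

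For the convection term, this transport term combines with the convection term into a commutator of the form
\[
\int \big[(\rho u\otimes u)^\varepsilon_\varepsilon-\rho u\otimes u^\varepsilon_\varepsilon\big]:\nabla(\varphi u^\varepsilon_\varepsilon).
\]
This commutator is controlled using $\nabla u\in L^2L^2$, $\rho\le\bar\rho$, and $u\in L^pL^q$. In the regime $q\ge4$ with $\tfrac2p+\tfrac2q\le1$, Hölder gives $u\otimes u\in L^{p/2}L^{q/2}$ paired against $\nabla u\in L^2L^2$, which closes exactly when $\tfrac2p+\tfrac2q\le1$. The $\nabla\rho$ part of the commutator is handled via $\nabla\rho=2\sqrt\rho\nabla\sqrt\rho\in L^\infty L^{3/2}$ together with the Sobolev embedding $u\in L^2L^6$. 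In the regime $3\le q<4$, we interpolate $L^\infty L^2\cap L^2L^6$ with $L^pL^q$ to obtain $u\in L^4L^4$-type control under $\tfrac1p+\tfrac3q\le1$.

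The viscous and pressure terms converge by standard arguments. For the pressure, we use the renormalized mass equation for $\rho^\gamma$ with $\rho$ bounded, together with a Friedrichs-type commutator estimate for $\mathrm{div}(\rho u)_\varepsilon$ using $u\in L^2H^1$ and $\rho\in L^\infty$.

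\textbf{Main obstacle: the temporal commutator.} The hardest step is showing that
\[
\int\big[\partial_t(\rho u^\varepsilon)_\varepsilon-\partial_t(\rho u^\varepsilon_\varepsilon)\big]\cdot\varphi u^\varepsilon_\varepsilon\to0,
\]
because $\partial_t\rho=-\mathrm{div}(\rho u)$ is not in any good $L^pL^q$ space under the Shinbrot hypothesis. Instead of a strong commutator bound, we invoke Lemma \ref{commutator} (the weak-type commutator estimate). The approach is to expand the time mollification, move $\partial_t$ onto the mollifier kernel, and substitute the mass equation weakly, pairing $\rho u$ against $\nabla(\varphi u_\varepsilon^\varepsilon\cdot u^\varepsilon)$. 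This pairing uses only $\rho u\in L^\infty L^2$-type bounds (from $\sqrt\rho u\in L^\infty L^2$) and $\nabla u\in L^2L^2$, and the result vanishes by the time-translation continuity of $u$ in $L^2L^2$. The viscosity, which supplies $\nabla u\in L^2$, is what makes this possible.

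\textbf{Globalization.} We set $\varphi=\psi_\tau(t)\phi_\delta(x)$, where $\phi_\delta$ is the boundary cut-off of \cite{chen2020energy} with $|\nabla\phi_\delta|\lesssim\delta^{-1}$ supported in a $\delta$-layer; on $\mathbb{T}^N$ we take $\phi\equiv1$. The boundary terms involve $\rho|u|^2u\cdot\nabla\phi_\delta$ and $Pu\cdot\nabla\phi_\delta$. Since $u=0$ on $\partial\Omega$, the Hardy inequality gives $u/d(x)\in L^2$, so these terms are bounded by $\|u\|_{L^pL^q}$ and $\|\nabla u\|_{L^2}$ restricted to the layer, and they tend to zero as $\delta\to0$.

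Letting $\tau\to0$ requires continuity of $t\mapsto\sqrt\rho u$ in $L^2$ at $t=0$ and at the endpoint $t$. This follows from weak continuity of $\rho u$ together with the energy inequality, which upgrades weak convergence to strong convergence; the hypothesis $u_0\in L^{6N/(6-N)}$ ensures the initial data are compatible for this argument. The result is \eqref{eq:energy_comNS} for every $t\in[0,T]$.
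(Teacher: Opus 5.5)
There is a genuine gap in your treatment of the time-derivative term. You split $\partial_t(\rho u)^\varepsilon_\varepsilon=\partial_t(\rho u^\varepsilon_\varepsilon)+[\partial_t(\rho u)^\varepsilon_\varepsilon-\partial_t(\rho u^\varepsilon_\varepsilon)]$, but you only show that the purely temporal commutator $\partial_t(\rho u^\varepsilon)_\varepsilon-\partial_t(\rho u^\varepsilon_\varepsilon)$ vanishes. The bracket also contains a spatial commutator sitting under the time derivative of the mollifier (the paper's $M_1$):
\[
\int_0^T\!\!\int_\Omega \varphi u^\varepsilon_\varepsilon\cdot\big[\partial_t(\rho u)^\varepsilon_\varepsilon-\partial_t(\rho u^\varepsilon)_\varepsilon\big]\,\mathrm{d}x\,\mathrm{d}t
=\int_0^T\!\!\int_\Omega \varphi u^\varepsilon_\varepsilon\cdot\Big[\big((\rho u)^\varepsilon-\rho u^\varepsilon\big)\ast\partial_t\eta_{t,\varepsilon}\Big]\,\mathrm{d}x\,\mathrm{d}t .
\]
Since $\|\partial_t\eta_{t,\varepsilon}\|_{L^1}\sim\varepsilon^{-1}$, this term has no bound uniform in $\varepsilon$ unless $(\rho u)^\varepsilon-\rho u^\varepsilon=O(\varepsilon)$, which requires a spatial derivative of $\rho$. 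The mass equation does not help, because it controls time increments of $\rho$, not space increments. Moving $\partial_t$ onto the test function instead produces $\partial_t u^\varepsilon_\varepsilon$, for which no uniform bound is available. This is exactly where the paper uses the hypothesis on $\nabla\sqrt\rho$. Write $\rho(x-y)-\rho(x)=-\int_0^1\nabla\rho(x-\theta y)\cdot y\,\mathrm{d}\theta$ with $|y|\le\varepsilon$, and use $\nabla\rho=2\sqrt\rho\,\nabla\sqrt\rho\in L^\infty L^{3/2}$ together with $u\in L^2L^6$ twice. This gives the uniform bound $C\|\nabla\rho\|_{L^\infty L^{3/2}}\|u\|_{L^2L^6}^2$. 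Vanishing then follows by approximating $u$ by smooth $u_n$. For the smooth part, $\partial_t$ falls either on $u_n$, which is harmless, or on $\rho$, which is handled through $\partial_t\rho=-\operatorname{div}(\rho u)$ in a negative Sobolev space. You invoke $\nabla\sqrt\rho$ only in the convection commutator, where it is not needed. There, $(\rho u\otimes u)^\varepsilon_\varepsilon-\rho u\otimes u^\varepsilon_\varepsilon\to0$ in $L^2L^2$ by plain strong convergence once $\rho|u|^2\in L^2L^2$. As written, your argument never uses the density-gradient hypothesis where it is essential.

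Your exponent bookkeeping is also off in the two places where the Shinbrot condition enters. In the temporal commutator, pairing $\rho u$ with $\nabla(\varphi u^\varepsilon_\varepsilon\cdot u^\varepsilon)$ gives a trilinear term of type $\int\rho|u|^2|\nabla u|$. It is not controlled by $\|\rho u\|_{L^\infty L^2}$ and $\|\nabla u\|_{L^2L^2}$ alone, since that would need $u\in L^2L^\infty$. Moreover, time translations are not continuous in $L^\infty_t$. What works is to interpolate $\rho u$ between $L^\infty L^2$ and $L^pL^q$ into $L^{\frac{2p}{p-2}}L^{p}$ (on the line $\frac1p+\frac1q=\frac12$). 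Then pair against $\nabla u\in L^2L^2$ and $u\in L^pL^q$, and use translation continuity in these finite-exponent spaces; this is consistent with excluding $(p,q)=(2,\infty)$. The same interpolation closes the convection estimate. For $2<p<4$, $u\otimes u\in L^{p/2}L^{q/2}$ does not pair with $L^2L^2$; instead one uses $\|\rho|u|^2\|_{L^2L^2}\lesssim\|\rho u\|_{L^\infty L^2}^{2-p/2}\|u\|_{L^pL^q}^{p/2}$. Your globalization step and your continuity argument at $t=0$ are acceptable in outline.
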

\begin{remark}
    Our result improves the known regularity criteria in the literature and closes a notable gap. We relax the stronger Serrin-type integrability required in prior works (e.g., \eqref{con:Yu} and \eqref{con:Chen et al.}) via a novel weak-type temporal commutator estimate (Lemma \ref{commutator}), which handles the nonlinear terms without demanding strong regularity of $\partial_t\rho$. The extended range of admissible exponents is visualized in {\rm Figure 1}. Particularly, we provide that $\nabla\sqrt{\rho}\in L^{\infty}L^{2}$ instead of $L^{\infty}L^{2}$, which is also weaker than \cite{yu2017energy,chen2020energy}.
\end{remark}
\begin{remark}
     It is worth noting that, due to the non‑separability of the space $L^{2}(0,T;L^{\infty}(\Omega))$, our method does not currently apply to the endpoint $(p,q)=(2,\infty)$; this remains for future investigation. 
\end{remark}
\begin{remark}
    The approaches employed to validate this theorem are equally applicable to the compressible Navier-Stokes equations with degenerate viscosity, a fact we will demonstrate by proving Theorem \ref{theoremcomNSdeg} in Section 5.
\end{remark}

The remainder of this paper is organized as follows. Section 2 introduces the necessary preliminaries, including definitions of weak solutions and key technical lemmas. Section 3 focuses on establishing the local energy conservation law in the sense of distributions using a mollification argument. In Section 4, we extend this local result to the global energy balance on the domain $\Omega$, completing the proof of Theorem \ref{theoremcomNS}. Finally, we apply our methods on the compressible Navier-Stokes equations with degenerate viscosity in the last Section 5.

\section{Preliminaries}

First, we introduce some notations used in this paper. For $p \in[1, \infty]$, the notation $L^p(0, T ; X)$ stands for the set of measurable functions on the interval $(0, T)$ with values in $X$ and $\|f(t, \cdot)\|_X$ belonging to $L^p(0, T)$. The classical Sobolev space $W^{k, p}(\Omega)$ is equipped with the norm $\|f\|_{W^{k, p}(\Omega)}=\sum_{|\alpha|=0}^k\left\|D^\alpha f\right\|_{L^p(\Omega)}$. 

\begin{definition}[\cite{adams2003sobolev}]\label{def:neg_sobolev}
	Let $\Omega \subset \mathbb{R}^n$ be an open set, $m \ge 1$ be an integer, and $1 < p < \infty$. Let $p'$ be the conjugate exponent of $p$ satisfying $1/p + 1/p' = 1$. The space $W^{-m,p'}(\Omega)$ is defined as the dual space of the Sobolev space $W_0^{m,p}(\Omega)$.
    An element $T \in W^{-m,p'}(\Omega)$ is a bounded linear functional on $W_0^{m,p}(\Omega)$. The norm on this space, denoted by $\|\cdot\|_{-m,p'}$, is the standard operator norm:
	\begin{equation}\label{con:neg_norm}
		\|T\|_{-m,p'} = \sup_{u \in W_0^{m,p}(\Omega), \|u\|_{m,p} \le 1} |\langle T, u \rangle|,
	\end{equation}
	where $\langle \cdot, \cdot \rangle$ denotes the duality pairing.
\end{definition}

 Then, we provide the definitions of the weak solutions to \eqref{eq:comNS} and \eqref{eq:comNSdeg} with initial data \eqref{con:initial}. As for the global existence of these weak solutions, one can refer to \cite{bresch2003some,bresch2015two,feireisl2001existence,lions1996mathematical,vasseur2016existence} and the references therein.
\begin{definition}\label{defcomNS}
    For a given $T>0$, we call $(\rho, u)$ a weak solution on $[0, T]$ to the constant viscosity system \eqref{eq:comNS}\eqref{con:initial} if:
    \begin{itemize}
        \item The momentum equations \eqref{eq:comNS} hold in $\mathcal{D}^{\prime}((0, T) \times \Omega)$ with the regularity:
        \begin{equation}\label{con:defcomNS}
            \rho^\gamma, \rho|u|^2 \in L^{\infty}\left(0, T ; L^1(\Omega)\right), \quad u \in L^2\left(0, T ; H_0^1(\Omega)\right);
        \end{equation}
        
        \item The initial conditions \eqref{con:initial} hold in $\mathcal{D}^{\prime}(\Omega)$;

        \item $(\rho, u)$ is a renormalized solution of the continuity equation in the sense of \cite{diperna1989ordinary};
        
        \item The energy inequality holds for almost every $t \in [0,T]$:
        $$
        \begin{aligned}
            \int_{\Omega}\left(\frac{1}{2} \rho|u|^2+\frac{\rho^\gamma}{\gamma-1}\right) \mathrm{d} x & +\int_0^t \int_{\Omega}\left(\mu|\nabla u|^2+(\mu+\lambda)(\operatorname{div} u)^2\right) \mathrm{d} x \mathrm{d} s \\
            & \leq \int_{\Omega}\left(\frac{1}{2} \rho_0\left|u_0\right|^2+\frac{\rho_0^\gamma}{\gamma-1}\right) \mathrm{d} x.
        \end{aligned}
        $$
    \end{itemize}
\end{definition} 
Now, let us define the mollifiers. Let
\begin{equation}\label{mollifier}
f^{\varepsilon}(t, x)=\eta_{x,\varepsilon} * f,\quad f_{\varepsilon}(t, x)=\eta_{t,\varepsilon} * f \quad \text{for } t>\varepsilon,
\end{equation}
where $\eta_{x,\varepsilon}=\frac{1}{\varepsilon^{N}} \eta_{x}\left( \frac{x}{\varepsilon}\right),\eta_{t,\varepsilon}=\frac{1}{\varepsilon} \eta_{t}\left( \frac{t}{\varepsilon}\right)$. Here $\eta_{x}(x) \geq 0$ is a smooth even function compactly supported in the space ball of radius 1, and $\eta_{t}(t) \geq 0$ is a smooth even function compactly supported in the time ball of radius 1 with integral equal to 1.

	\begin{lemma} 
		 Let $p \in[1, \infty)$ and $f \in W_0^{1, p}(\Omega)$. There is a constant $C$ which depends on $p$ and $\Omega$, such that
		$$
		\left\|\frac{f(x)}{\operatorname{dist}(x, \partial \Omega)}\right\|_{L^p(\Omega)} \leq C\|f\|_{W_0^{1, p}(\Omega)}.
		$$
	\end{lemma}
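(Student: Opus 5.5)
The plan is the classical localization argument. First reduce to smooth functions: $C_c^\infty(\Omega)$ is dense in $W_0^{1,p}(\Omega)$ for every $p\in[1,\infty)$, so take $f_k\in C_c^\infty(\Omega)$ with $f_k\to f$ in $W^{1,p}$ and, along a subsequence, almost everywhere. If the inequality holds for each $f_k$ with a constant independent of $k$, Fatou's lemma gives it for $f$. So it suffices to treat $f\in C_c^\infty(\Omega)$, with a constant depending only on $p$ and $\Omega$. In the torus case there is no boundary and nothing needs to be proved, so assume $\Omega$ is bounded with $C^1$ boundary.

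Next I localize near $\partial\Omega$. By compactness and the $C^1$ regularity, cover $\partial\Omega$ by finitely many open sets $U_1,\dots,U_m$. In each $U_j$, after a rotation, $\Omega\cap U_j=\{(x',x_N): x_N>g_j(x')\}\cap U_j$ with $g_j\in C^1$. Add an interior set $U_0\Subset\Omega$ on which $\operatorname{dist}(x,\partial\Omega)\ge c_0>0$, and take a smooth partition of unity $\{\chi_j\}$ subordinate to this cover. Since $\|\chi_j f\|_{W^{1,p}}\le C\|f\|_{W^{1,p}}$, it is enough to bound each $\chi_j f/\operatorname{dist}(x,\partial\Omega)$. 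The interior piece is trivial: its quotient is at most $c_0^{-1}|f|$. In a boundary chart, the $C^1$ (hence locally Lipschitz) graph gives $\operatorname{dist}(x,\partial\Omega)\ge c\,(x_N-g_j(x'))$ for $x$ near the boundary. Setting $s=x_N-g_j(x')$ and $h(x',s)=(\chi_jf)(x',g_j(x')+s)$, the problem becomes a one-dimensional inequality in $s$ for functions vanishing at $s=0$ and supported in $s<\delta$. It is applied for a.e. $x'$ and then integrated in $x'$. The change of variables is bi-Lipschitz with unit Jacobian, so $W^{1,p}$ norms are comparable.

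The core is the one-dimensional estimate for $h\in C^1_c([0,\delta))$ with $h(0)=0$. I write $h(s)=\int_0^s h'(\sigma)\,d\sigma$, so $|h(s)|/s\le \frac1s\int_0^s|h'|$. For $p>1$, Hardy's inequality for the averaging operator gives $\int_0^\delta |h/s|^p\,ds\le (p/(p-1))^p\int_0^\delta|h'|^p\,ds$. This can be proved by Minkowski's integral inequality after writing $\frac1s\int_0^s h'=\int_0^1 h'(ts)\,dt$. Summing the finitely many charts then yields the constant $C(p,\Omega)$.

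The main obstacle is the endpoint $p=1$, because the Hardy constant $p/(p-1)$ blows up there. Applying Fubini directly gives only $\int_0^\delta |h|/s\,ds\le\int_0^\delta |h'(\sigma)|\log(\delta/\sigma)\,d\sigma$, which carries a logarithmic weight that the $W^{1,1}$ norm does not control. The first thing I would try is to exploit the extra structure available in applications, namely $f\in W_0^{1,1}\cap W^{1,p_0}$ for some $p_0>1$, as is the case for the velocity here. On a bounded domain, Hölder's inequality then reduces the $L^1$ bound to the $L^{p_0}$ bound. However, I expect that no argument gives the $p=1$ case with only $\|f\|_{W^{1,1}}$ on the right. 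The function $h(s)=1/\log(1/s)$ near $0$ has $h'\in L^1$ while $h/s\notin L^1$, so it is a genuine counterexample. I would therefore state carefully that the proof above establishes the claim for $1<p<\infty$. At $p=1$, the plan can only deliver the bound under the additional integrability just described, or with the logarithmically weighted norm on the right.
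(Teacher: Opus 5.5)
The paper states this lemma without proof. It is quoted as the classical Hardy inequality and used only implicitly in Section 4, with $p=2$ and $f=u(t)\in H_0^1(\Omega)$, to turn the factor $|\nabla\phi_\delta|\le 2/\operatorname{dist}(x,\partial\Omega)$ into $\|\nabla u\|_{L^2}$. So there is no argument of the paper to compare yours against.

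Your route is the standard proof and is correct for $1<p<\infty$:
\begin{itemize}
\item density plus Fatou;
\item a finite $C^1$ chart cover with a partition of unity;
\item straightening by $s=x_N-g_j(x')$, which has unit Jacobian and satisfies $|\partial_s h|\le|\nabla(\chi_j f)|$;
\item the one-dimensional Hardy inequality via Minkowski, with constant $p/(p-1)$.
\end{itemize}
One detail deserves to be written out: the bound $\operatorname{dist}(x,\partial\Omega)\ge c\,s$ on $\operatorname{supp}\chi_j\cap\Omega$. It uses the Lipschitz bound $L$ on $g_j$, which gives distance at least $s/\sqrt{1+L^2}$ to the local graph. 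It also uses the positive distance from the compact set $\operatorname{supp}\chi_j\subset U_j$ to $\partial\Omega\setminus U_j$, and the boundedness of $s$ on that support.

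Your diagnosis at $p=1$ is also right: the lemma as stated is false there, and the range should be $1<p<\infty$. Your one-dimensional counterexample transplants to any bounded $C^1$ domain. In a chart, take $f(x)=\chi(x)\,h(x_N-g_j(x'))$, with $h(s)=1/\log(1/s)$ near $0$ and $\chi$ a cutoff equal to one near a boundary point.
\begin{itemize}
\item $\nabla f\in L^1$, because $\int_0|h'|\,ds<\infty$.
\item $f\in W_0^{1,1}(\Omega)$: the Lipschitz, compactly supported truncations $\chi\,(h(s)-h(\epsilon))_+$ converge to $f$ in $W^{1,1}$ with error $O(h(\epsilon))$.
\item Since $\operatorname{dist}(x,\partial\Omega)\le s$, we get $\int_\Omega |f|/\operatorname{dist}(x,\partial\Omega)\,dx\ge c\int_0 \frac{ds}{s\log(1/s)}=\infty$.
\end{itemize}
Because the paper only needs $p=2$, this misstatement does not affect its argument, and your proposal covers everything the paper actually uses.
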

	\begin{lemma}[\cite{mellet2007barotropic}]\label{aubinlions} Let $X \hookrightarrow B \hookrightarrow Y$ be three Banach spaces with compact embedding $X \hookrightarrow \hookrightarrow Y$. Further, let there exist $0<\theta<1$ and $M>0$ such that
	$$
	\|v\|_B \leq M\|v\|_X^{1-\theta}\|v\|_Y^\theta \text { for all } v \in X \cap Y \text {. }
	$$
	Denote for $T>0$,
	$$
	W(0, T):=W^{s_0, r_0}((0, T), X) \cap W^{s_1, r_1}((0, T), Y)
	$$
	with
	$$
	\begin{gathered}
		s_0, s_1 \in \mathbb{R} ; r_0, r_1 \in[1, \infty] \\
		s_\theta:=(1-\theta) s_0+\theta s_1, \frac{1}{r_\theta}:=\frac{1-\theta}{r_0}+\frac{\theta}{r_1}, s^*:=s_\theta-\frac{1}{r_\theta}.
	\end{gathered}
	$$
	Assume that $s_\theta>0$ and $F$ is a bounded set in $W(0, T)$. Then, we have 
	
	If $s_* \leq 0$, then $F$ is relatively compact in $L^p((0, T), B)$ for all $1 \leq p<p^*:=-\frac{1}{s^*}$. 
	
	If $s_*>0$, then $F$ is relatively compact in $C((0, T), B)$.
\end{lemma}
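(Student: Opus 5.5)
The plan is to reduce the statement to two facts: a uniform fractional time-regularity bound in $B$, and a compactness property obtained by time-regularization. Both rest on the interpolation inequality $\|v\|_B \le M\|v\|_X^{1-\theta}\|v\|_Y^{\theta}$ and on the compact embedding $X\hookrightarrow\hookrightarrow Y$.

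\emph{Step 1: embedding into $W^{s_\theta,r_\theta}((0,T),B)$.} First I will show that $W(0,T)$ embeds continuously into $W^{s_\theta,r_\theta}((0,T),B)$. I will use the characterisation of vector-valued fractional Sobolev (or Besov) spaces via finite differences, or via a Littlewood--Paley decomposition in $t$ after extending $f$ to $\mathbb{R}$. On each dyadic block $\Delta_j f$ the interpolation inequality applies pointwise in time. Hölder's inequality with exponents $\frac{r_\theta}{(1-\theta)r_0}$ and $\frac{r_\theta}{\theta r_1}$ then gives
\[
2^{js_\theta}\|\Delta_j f\|_{L^{r_\theta}B}\le M\bigl(2^{js_0}\|\Delta_j f\|_{L^{r_0}X}\bigr)^{1-\theta}\bigl(2^{js_1}\|\Delta_j f\|_{L^{r_1}Y}\bigr)^{\theta}.
\]
This puts $F$ in a bounded set of $B^{s_\theta}_{r_\theta,\infty}((0,T),B)$. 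By the one-dimensional Sobolev embedding in time, this set is bounded in $L^{p}((0,T),B)$ for $p<p^*$ when $s^*\le 0$, and in $C^{0,\alpha}((0,T),B)$ for some $\alpha>0$ when $s^*>0$.

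\emph{Step 2: uniform control of mollification errors.} Slightly lowering the smoothness index, the same Besov bound gives a uniform modulus of continuity for time translates, namely
\[
\sup_{f\in F}\|f-f_h\|_{L^p((0,T),B)}\to 0 \quad\text{as } h\to 0,
\]
with $C((0,T),B)$ in place of $L^p$ in the case $s^*>0$. Here $f_h=\eta_{t,h}*f$ is the temporal mollification of \eqref{mollifier}, applied on slightly shrunken intervals or after an extension to $\mathbb{R}$. The point is that one gives up an arbitrarily small amount of smoothness to turn the Besov bound into a vanishing rate in $h$.

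\emph{Step 3: compactness for fixed $h$.} Fix $h>0$. Since time mollification gains arbitrarily many derivatives, regardless of the sign of $s_0$ or $s_1$, the family $\{f_h\}_{f\in F}$ is bounded in $C^1([0,T],X)$ and in $C^1([0,T],Y)$. By Arzelà--Ascoli together with $X\hookrightarrow\hookrightarrow Y$, it is relatively compact in $C([0,T],Y)$. Take a sequence that is Cauchy in $C(Y)$ and bounded in $C(X)$. The interpolation inequality gives
\[
\|f_h-g_h\|_{C(B)}\le M(2C)^{1-\theta}\|f_h-g_h\|_{C(Y)}^{\theta},
\]
so the sequence is Cauchy in $C([0,T],B)$.

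\emph{Conclusion.} Steps 2 and 3 together show that $F$ is totally bounded in $L^p((0,T),B)$, respectively in $C((0,T),B)$. A diagonal argument over $h\to0$ then yields relative compactness.

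\emph{Main obstacle.} I expect the delicate point to be Step 1 when $s_0$ or $s_1$ is negative, or non-integer. There the spaces must be understood via a distributional extension in time, and the interpolation inequality must be transferred correctly to the dyadic blocks. I would handle this with a bounded extension operator from $(0,T)$ to $\mathbb{R}$ that acts simultaneously on the $X$- and $Y$-valued scales, as in Amann's treatment. The borderline exponent $p=p^*$ is excluded precisely because Step 2 requires the small loss of smoothness.
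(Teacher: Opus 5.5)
The paper gives no proof of this lemma. It is quoted from \cite{mellet2007barotropic}, which takes it from Amann's work on compact embeddings of vector-valued Sobolev and Besov spaces; there it is derived from his compact-embedding theorems for vector-valued Besov spaces together with interpolation.

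Your proposal is a correct self-contained proof, and it takes a genuinely different route, in the spirit of Simon's compactness criterion. You separate two ingredients:
\begin{itemize}
\item \emph{time regularity}: the interpolation inequality on dyadic blocks gives a uniform $B^{s_\theta}_{r_\theta,\infty}$ bound with values in $B$, hence a uniform mollification error in $L^p(B)$, resp.\ $C(B)$;
\item \emph{spatial compactness}: Arzel\`a--Ascoli for the mollified family, which uses only the $X$-bound, $X\hookrightarrow\hookrightarrow Y$, and the interpolation inequality once more.
\end{itemize}
You then close by total boundedness.

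This makes transparent where each hypothesis enters and avoids any abstract compact-embedding theorem. It still needs, as you say, the vector-valued Besov machinery on an interval for arbitrary real $s_0,s_1$, in particular one extension operator bounded on both scales at once. That machinery is exactly what the citation spares the paper.

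For the only instance the paper uses, your route becomes elementary. There $X=W^{1,\frac32}$, $Y=L^{\frac{12}{11}}$, $s_0=0$, $r_0=\infty$, $s_1=1$, $r_1=2$, so $s^*=\theta/2>0$. Even reflection at $t=0,T$ extends both bounds. Step 1 then reduces to $\|f(t+h)-f(t)\|_B\le M(2C)^{1-\theta}(Ch^{1/2})^{\theta}$, with no Littlewood--Paley theory needed.

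Some minor slips:
\begin{itemize}
\item The H\"older exponents in Step 1 should be $\frac{r_0}{(1-\theta)r_\theta}$ and $\frac{r_1}{\theta r_\theta}$. Yours are not conjugate, though the displayed inequality is correct.
\item Step 1 yields $B^{s_\theta}_{r_\theta,\infty}$, not $W^{s_\theta,r_\theta}$. The weaker space is all you actually use, so nothing breaks.
\item In your argument the endpoint $p=p^*$ is already lost in Step 1, because $B^{0}_{p^*,\infty}\not\hookrightarrow L^{p^*}$.
\item Step 2 needs no further loss of smoothness. A $B^{\sigma}_{p,\infty}$ bound with $\sigma>0$ directly gives $\|f-f_h\|_{L^p(B)}\lesssim h^{\min(\sigma,1)}$, with a logarithmic loss at $\sigma=1$.
\item Total boundedness in a complete space already gives relative compactness, so the diagonal argument is unnecessary.
\end{itemize}
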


\begin{lemma}\label{commutator} 
Let $1 \leq \bar{p}, \bar{q}, p_1, q_1, p_2, q_2 \leq \infty$, with $\frac{1}{\bar{p}}+\frac{1}{p_1}+\frac{1}{p_2}=1$ and $\frac{1}{\bar{q}}+\frac{1}{q_1}+\frac{1}{q_2}=1$. Let $f\in L^{p_1}\left(0, T ; L^{q_1}(\Omega)\right), \partial_{t} f \in L^{p_1}\left(0, T ; W^{-1,q_1}(\Omega)\right), g \in L^{p_2}\left(0, T ; W^{1,q_2}(\Omega)\right), \varphi \in L^{\bar{p}}\left(0, T ; W_{0}^{1,\bar{q}}(\Omega)\right)$. Then, there holds
$$
\int_0^T \int_{\Omega}\varphi\left[\partial_t\left(f g\right)_{\varepsilon}-\partial_t(f g_{\varepsilon})\right]\mathrm{d} x \mathrm{d} t \rightarrow 0,
$$
as $\varepsilon \rightarrow 0$ if $p_2, q_2,\bar{p},\bar{q}<\infty$.
\end{lemma}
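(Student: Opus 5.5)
The plan is to rewrite the commutator so that the time derivative lands either on $f$ (paired with $\varphi$ and $g$ through the duality $W^{-1,q_1}$ versus $W^{1,q_1'}_0$) or on the mollifier. This produces a bilinear functional that is uniformly bounded in $\varepsilon$ and vanishes on smooth data. Since $\eta_{t,\varepsilon}$ is even, time mollification is self-adjoint on functions supported away from the endpoints. For $\varepsilon$ small, and working first with $\varphi$ compactly supported in time (which is dense in $L^{\bar p}(0,T;W^{1,\bar q}_0)$ because $\bar p<\infty$), we write
\begin{equation*}
I_\varepsilon:=\int_0^T\!\!\int_\Omega \varphi\bigl[\partial_t(fg)_\varepsilon-\partial_t(fg_\varepsilon)\bigr]
=-\int_0^T\!\!\int_\Omega (\partial_t\varphi)_\varepsilon\, fg+\int_0^T\!\!\int_\Omega \partial_t\varphi\, f g_\varepsilon .
\end{equation*}
Using $\partial_t\varphi$ is only formal, so instead we expand $\partial_t(fg_\varepsilon)=\partial_t f\, g_\varepsilon+f\,\partial_t g_\varepsilon$ and $\partial_t(fg)_\varepsilon=\int \eta_{t,\varepsilon}'(t-s)f(s)g(s)\,ds$. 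Inserting $f(s)=f(t)-\int_s^t\partial_t f$ gives the standard decomposition
\begin{equation*}
\partial_t(fg)_\varepsilon-\partial_t(fg_\varepsilon)=\int \eta'_{t,\varepsilon}(t-s)\bigl(f(s)-f(t)\bigr)g(s)\,ds-\partial_t f\,g_\varepsilon .
\end{equation*}

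\emph{Step 1: uniform bound.} We show $|I_\varepsilon|\le C\|\varphi\|_{L^{\bar p}W^{1,\bar q}}\bigl(\|f\|_{L^{p_1}L^{q_1}}+\|\partial_tf\|_{L^{p_1}W^{-1,q_1}}\bigr)\|g\|_{L^{p_2}W^{1,q_2}}$ with $C$ independent of $\varepsilon$.

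For the last term, pair $\partial_tf(t)\in W^{-1,q_1}$ with $\varphi g_\varepsilon(t)\in W^{1,q_1'}_0$. Here $1/q_1'=1/\bar q+1/q_2$, and the product rule gives $\|\varphi g_\varepsilon\|_{W^{1,q_1'}}\lesssim \|\varphi\|_{W^{1,\bar q}}\|g_\varepsilon\|_{W^{1,q_2}}$; when the Hölder exponents do not match exactly, use the $L^\infty$ control implicit in the exponent bookkeeping or Sobolev embedding. Then apply Hölder in time.

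For the first term, write $f(s)-f(t)=-\int_0^1 \partial_t f(t+\theta(s-t))(t-s)\,d\theta$ and note $|t-s|\,|\eta'_{t,\varepsilon}(t-s)|\lesssim \eta$-type kernel of mass $O(1)$. Pairing again in $W^{-1}\times W^{1}_0$ and using Young's inequality in time bounds this term by the same quantity.

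\emph{Step 2: convergence on a dense class.} If $f,g,\varphi$ are smooth (say $C^1$ in $t$ with values in the relevant spaces), then $\partial_t(fg)_\varepsilon\to\partial_t(fg)$ and $\partial_t(fg_\varepsilon)\to\partial_t(fg)$ in $L^1_{loc}$, so $I_\varepsilon\to0$. By density of smooth functions in $L^{p_2}W^{1,q_2}$ and $L^{\bar p}W^{1,\bar q}_0$ (this is where $p_2,q_2,\bar p,\bar q<\infty$ is needed), together with the trilinear uniform bound, an $\epsilon/3$ argument lets us approximate $g$ and $\varphi$ while keeping $f$ fixed. For $f$ we do not need density: once $g$ and $\varphi$ are smooth, $I_\varepsilon$ is handled by moving the derivative onto the smooth factors, and strong convergence of mollifications of $g,\varphi$ gives the limit $0$ directly, using only $f\in L^{p_1}L^{q_1}$ and $\partial_tf\in L^{p_1}W^{-1,q_1}$. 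Finally, we remove the compact-support-in-time restriction on $\varphi$ by the same density.

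\emph{Main obstacle.} The hard part is Step 1's uniform bound for the first (mollifier-derivative) term. It must be done purely through the $W^{-1}$–$W^1_0$ duality, which requires $\varphi\,g(s)$ to lie in $W^{1}_0$ with exactly the right exponents. I would first check the exponent arithmetic carefully, treating the product $\nabla\varphi\,g+\varphi\nabla g$ via Hölder with $1/\bar q+1/q_2=1-1/q_1$. Where a missing $L^\infty$ factor appears, I would fall back on Sobolev embedding, or absorb the loss in the density approximation.
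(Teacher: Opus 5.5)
Your proposal is correct and follows essentially the same route as the paper: the same splitting into $\int\eta_{t,\varepsilon}'(t-s)(f(s)-f(t))g(s)\,ds-\partial_tf\,g_\varepsilon$, a uniform trilinear bound from the $W^{-1,q_1}$--$W^{1,q_1'}_0$ pairing plus the fundamental theorem of calculus in time, and then density. The one difference is in the density step: the paper approximates only $g$ and uses $\varphi_\varepsilon\to\varphi$, whereas you also approximate $\varphi$ and integrate by parts onto it, which is equally valid.

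Your exponent worry is unfounded. Since $1/\bar q+1/q_2=1/q_1'$ exactly, Hölder gives $\|\varphi g\|_{W^{1,q_1'}}\le C\|\varphi\|_{W^{1,\bar q}}\|g\|_{W^{1,q_2}}$ with no missing $L^\infty$ factor, so the fallback to Sobolev embedding is never needed.
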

\begin{remark} 
     Lemma \ref{commutator} is of independent interest, as
    it is designed precisely for situations where the commutator acts on a test function
    and the derivative is defined as a distribution. Compared to the classical Lions's commutator estimate
    \begin{equation}\label{Lions}
    	\|\partial_t\left(f g\right)_{\varepsilon}-\partial_t(f g_{\varepsilon})\|_{\frac{p_{1}p_{2}}{p_{1}+p_{2}}}\leq\|\partial_{t}f\|_{p_{1}}\|g\|_{p_{2}},
    \end{equation}
    Lemma \ref{commutator} requires weaker regularity on $\partial_t f$ and obtains weaker convergence. Thereby, one can regard it as a distribution version of \eqref{Lions}.
\end{remark}
\begin{proof}
First, by a direct calculation, we know that
$$\int_0^T \int_{\Omega}\varphi\left[\partial_t\left(f g\right)_{\varepsilon}-\partial_t(f g_{\varepsilon})\right]\mathrm{d} x \mathrm{d} t=I-\int_0^T \int_{\Omega}\varphi\partial_tf g_{\varepsilon}\mathrm{d} x \mathrm{d} t,$$
where
$$
\begin{aligned}
	I=:&\int_0^T \int_{\Omega}\varphi\left[\partial_t\left(f g\right)_{\varepsilon}-f\partial_t g_{\varepsilon}\right]\mathrm{d} x \mathrm{d} t\\
    =&\int_{\varepsilon}^{T-\varepsilon}   \int_{t-\varepsilon}^{t+\varepsilon}\frac{1}{\varepsilon^{2}}\partial_{t} \eta_{t}\left(\frac{t-s}{\varepsilon}\right) \int_{\Omega} \left[f(s,x)-f(t,x)\right] g(s, x)\varphi(t,x)\mathrm{d}x \mathrm{d} s \mathrm{d}t\\
	=&\int_{\varepsilon}^{T-\varepsilon}   \int_{t-\varepsilon}^{t+\varepsilon}\frac{1}{\varepsilon^{2}}\partial_{t} \eta_{t}\left(\frac{t-s}{\varepsilon}\right) \int_{t}^{s} \left\langle \partial_{t}f(\tau, x)  , g(s, x)\varphi(t,x)\right\rangle\mathrm{d} \tau\mathrm{d} s \mathrm{d}t.
\end{aligned}
$$
As
$$
\begin{aligned}
	&\left\langle \partial_{t}f(\tau, x)  , g(s, x)\varphi(t,x)\right\rangle\\
    \leq&\| \partial_{t}f(\tau)\|_{W^{-1,q_{1}}(\Omega)}\left(\|\nabla g(s)\|_{L^{q_{2}}(\Omega)}\|\varphi(t)\|_{L^{\bar{q}}(\Omega)}+\| g(s)\|_{L^{q_{2}}(\Omega)}\|\nabla\varphi(t)\|_{L^{\bar{q}}(\Omega))}\right)
\end{aligned}
$$
then
$$
\begin{aligned}
	\int_0^T \int_{\Omega}\varphi\partial_tf g_{\varepsilon}\mathrm{d} x \mathrm{d} t
	=&\int_0^T \left\langle \partial_{t}f(t, x)  , g(t, x)\varphi(t,x)\right\rangle \mathrm{d} t\\
	\leq& \| \partial_{t}f\|_{L^{p_{1}}(0,T;W^{-1,q_{1}}(\Omega))}\|g\|_{L^{p_{2}}(0,T;W^{1,q_{2}}(\Omega))}\|\varphi\|_{L^{\bar{p}}(0,T;W^{1,\bar{q}}(\Omega))}.
\end{aligned}$$
Meanwhile we have
$$
\begin{aligned}
	I\leq&\int_{\varepsilon}^{T-\varepsilon}   \int_{t-\varepsilon}^{t+\varepsilon}\frac{1}{\varepsilon^{2}}\partial_{t} \eta_{t}\left(\frac{t-s}{\varepsilon}\right) \int_{[s,t]} \| \partial_{t}f(\tau)\|_{W^{-1,q_{1}}(\Omega)}\|\nabla g(s)\|_{L^{q_{2}}(\Omega)}\|\varphi(t)\|_{L^{\bar{q}}(\Omega)}\mathrm{d} \tau\mathrm{d} s \mathrm{d}t\\
	&+\int_{\varepsilon}^{T-\varepsilon}   \int_{t-\varepsilon}^{t+\varepsilon}\frac{1}{\varepsilon^{2}}\partial_{t} \eta_{t}\left(\frac{t-s}{\varepsilon}\right) \int_{[s,t]} \| \partial_{t}f(\tau)\|_{W^{-1,q_{1}}(\Omega)}\| g(s)\|_{L^{q_{2}}(\Omega)}\|\nabla\varphi(t)\|_{L^{\bar{q}}(\Omega)}\mathrm{d} \tau\mathrm{d} s \mathrm{d}t\\
    \leq&\int_{\varepsilon}^{T-\varepsilon}   \int_{t-\varepsilon}^{t+\varepsilon}\frac{1}{\varepsilon^{2}}\partial_{t} \eta_{t}\left(\frac{t-s}{\varepsilon}\right)  \left(\| \partial_{t}f\|_{W^{-1,q_{1}}(\Omega)}\ast\boldsymbol{1}_{[-\varepsilon,\varepsilon]}\right)(t)\|\nabla g(s)\|_{L^{q_{2}}(\Omega)}\|\varphi(t)\|_{L^{\bar{q}}(\Omega)}\mathrm{d} s \mathrm{d}t\\
	&+\int_{\varepsilon}^{T-\varepsilon}   \int_{t-\varepsilon}^{t+\varepsilon}\frac{1}{\varepsilon^{2}}\partial_{t} \eta_{t}\left(\frac{t-s}{\varepsilon}\right) \left(\| \partial_{t}f\|_{W^{-1,q_{1}}(\Omega)}\ast\boldsymbol{1}_{[-\varepsilon,\varepsilon]}\right)(t)\| g(s)\|_{L^{q_{2}}(\Omega)}\|\nabla\varphi(t)\|_{L^{\bar{q}}(\Omega)}\mathrm{d} s \mathrm{d}t\\
	\leq&\| \partial_{t}f\|_{L^{p_{1}}(0,T;W^{-1,q_{1}}(\Omega))}\|g\|_{L^{p_{2}}(0,T;W^{1,q_{2}}(\Omega))}\|\varphi\|_{L^{\bar{p}}(0,T;W^{1,\bar{q}}(\Omega))},
\end{aligned}
$$
hence we obtain that
\[
\begin{split}
	&\int_0^T \int_{\Omega}\varphi\left[\partial_t\left(f g\right)_{\varepsilon}-\partial_t(f g_{\varepsilon})\right]\mathrm{d} x \mathrm{d} t\\
	 \leq&\| \partial_{t}f\|_{L^{p_{1}}(0,T;W^{-1,q_{1}}(\Omega))}\|g\|_{L^{p_{2}}(0,T;W^{1,q_{2}}(\Omega))}\|\varphi\|_{L^{\bar{p}}(0,T;W^{1,\bar{q}}(\Omega))}. 
\end{split}\]
$$$$
Furthermore, if $1 \leq p_2, q_2<\infty$, let $\left\{g_n\right\} \in C_0^{\infty}((0,T)\times\Omega)$ with $g_n \rightarrow g$ strongly in $ L^{p_2}(W^{1,q_2})$. Thus, by the density arguments and properties of the standard mollifiers, we arrive at
$$
\begin{aligned}
&\int_0^T \int_{\Omega}\varphi\left[\partial_t\left(f g\right)_{\varepsilon}-\partial_t(f g_{\varepsilon})\right]\mathrm{d} x \mathrm{d} t\\
=&\int_0^T \int_{\Omega}\varphi\left[\partial_t\left(f (g-g_n)\right)_{\varepsilon}-\partial_t(f (g-g_n)_{\varepsilon})\right]\mathrm{d} x \mathrm{d} t
+\int_0^T \int_{\Omega}\varphi\left[\partial_t\left(f g_n\right)_{\varepsilon}-\partial_t(f g_{n\varepsilon})\right]\mathrm{d} x \mathrm{d} t\\
= &\int_0^T \int_{\Omega}\varphi\left[\partial_t\left(f (g-g_n)\right)_{\varepsilon}-\partial_t(f (g-g_n)_{\varepsilon})\right]\mathrm{d} x \mathrm{d} t
+\int_0^T \int_{\Omega}\varphi\left[\left(f\partial_t g_n\right)_{\varepsilon}-(f\partial_t g_{n\varepsilon})\right]\mathrm{d} x \mathrm{d} t \\
&+\int_0^T \int_{\Omega}\varphi\left[\left(\partial_tf g_n\right)_{\varepsilon}-(\partial_tf g_{n\varepsilon})\right]\mathrm{d} x \mathrm{d} t,
\end{aligned}$$
and
$$
\begin{aligned}
I=&\int_0^T \int_{\Omega}\varphi\left[\partial_t\left(f (g-g_n)\right)_{\varepsilon}-\partial_t(f (g-g_n)_{\varepsilon})\right]\mathrm{d} x \mathrm{d} t
+\int_0^T \int_{\Omega}\varphi\left[\left(f\partial_t g_n\right)_{\varepsilon}-(f\partial_t g_{n\varepsilon})\right]\mathrm{d} x \mathrm{d} t\\
&+\int_0^T \left\langle \partial_{t}f , g_n\varphi_{\varepsilon}-g_n\varphi\right\rangle  +\left\langle \partial_tf, g_n\varphi- g_{n\varepsilon}\varphi\right\rangle \mathrm{d} x \mathrm{d} t\\
\leq &\| \partial_{t}f\|_{L^{p_{1}}(0,T;W^{-1,q_{1}}(\Omega))}\|g-g_n\|_{L^{p_{2}}(0,T;W^{1,q_{2}}(\Omega))}\|\varphi\|_{L^{\bar{p}}(0,T;W^{1,\bar{q}}(\Omega))}\\
&+\| f\|_{L^{p_{1}}(0,T;L^{q_{1}}(\Omega))}\|\partial_{t}g_{n}\|_{L^{p_{2}}(0,T;L^{q_{2}}(\Omega))}\|\varphi_{\varepsilon}-\varphi\|_{L^{\bar{p}}(0,T;L^{\bar{q}}(\Omega))}\\
&+\| f\|_{L^{p_{1}}(0,T;L^{q_{1}}(\Omega))}\|\partial_{t}g_{n}-\partial_{t}g_{n\varepsilon}\|_{L^{p_{2}}(0,T;L^{q_{2}}(\Omega))}\|\varphi\|_{L^{\bar{p}}(0,T;L^{\bar{q}}(\Omega))}\\
&+\| \partial_{t}f\|_{L^{p_{1}}(0,T;W^{-1,q_{1}}(\Omega))}\|g\|_{L^{p_{2}}(0,T;W^{1,q_{2}}(\Omega))}\|\varphi_{\varepsilon}-\varphi\|_{L^{\bar{p}}(0,T;W^{1,\bar{q}}(\Omega))}\\
&+\| \partial_{t}f\|_{L^{p_{1}}(0,T;W^{-1,q_{1}}(\Omega))}\|g_n-g_{n\varepsilon}\|_{L^{p_{2}}(0,T;W^{1,q_{2}}(\Omega))}\|\varphi\|_{L^{\bar{p}}(0,T;W^{1,\bar{q}}(\Omega))}
\rightarrow 0,
\end{aligned}$$
as $\varepsilon \rightarrow 0$.

\end{proof}
\section{Local energy equality of the compressible Navier-Stokes equations with constant viscosity}
\begin{theorem}\label{localenergy}

Let $(\rho, u)$ be a weak solution to the compressible Navier-Stokes equations \eqref{eq:comNS}. If assumptions \eqref{condi1-th2} and \eqref{condi2-th2} hold,
then the energy conservation holds in the sense of distributions, i.e.
\begin{equation}\label{equ-local energy}
	\begin{split}
		&\frac{1}{2}\int_0^T \int_{\Omega}\partial_{t}\varphi \rho |u |^{2}\mathrm{d} x \mathrm{d} t+\int_0^T \int_{\Omega}\varphi \left[\rho^{\gamma}{\rm div}u+\mu|\nabla u|^{2}+(\mu+\lambda)({\rm div}u)^{2}\right] \mathrm{d} x \mathrm{d} t\\
		&\quad+\int_0^T \int_{\Omega}\nabla\varphi \cdot\left[\frac{1}{2}(\rho u) |u |^{2}+\rho^{\gamma}u +\mu u\nabla u+(\mu+\lambda)u {\rm div}u\right]\mathrm{d} x \mathrm{d} t=0.
	\end{split}
	\end{equation}
where $\varphi \in C_c^\infty((0, T) \times \Omega)$.
\end{theorem}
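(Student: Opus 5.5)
We test the momentum equation in \eqref{eq:comNS} with $(\varphi u_{\varepsilon}^{\varepsilon})_{\varepsilon}^{\varepsilon}$, where $\varphi\in C_c^\infty((0,T)\times\Omega)$ and $\varepsilon$ is smaller than the distance from $\operatorname{supp}\varphi$ to the parabolic boundary, so every mollification is well defined. Moving the outer mollifiers onto the equation gives
\begin{equation*}
\int_0^T\!\!\int_\Omega \varphi u_\varepsilon^\varepsilon\Big[\partial_t(\rho u)_\varepsilon^\varepsilon+\operatorname{div}(\rho u\otimes u)_\varepsilon^\varepsilon-\mu\Delta u_\varepsilon^\varepsilon-(\mu+\lambda)\nabla\operatorname{div}u_\varepsilon^\varepsilon+\nabla (\rho^\gamma)_\varepsilon^\varepsilon\Big]\,\mathrm{d}x\,\mathrm{d}t=0 .
\end{equation*}
We then pass to the limit term by term, using \eqref{condi1-th2}, \eqref{condi2-th2}, the bounds $0\le\rho\le\bar\rho$, $\nabla\sqrt\rho\in L^\infty L^{3/2}$, $u\in L^2H^1_0\cap L^pL^q$, and the Shinbrot relation \eqref{con:shinbrot_cond}.

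\textbf{Viscous and pressure terms.} After integrating by parts, the viscous terms are $\mu\int\varphi|\nabla u_\varepsilon^\varepsilon|^2+\mu\int\nabla\varphi\cdot u_\varepsilon^\varepsilon\nabla u_\varepsilon^\varepsilon$ plus the analogous $\operatorname{div}$ terms. These converge by strong convergence of mollifications in $L^2H^1$. The pressure term converges to $-\int\rho^\gamma\operatorname{div}(\varphi u)$, because $\rho^\gamma\in L^\infty$ and $u\in L^2H^1$. This produces the terms $\varphi\rho^\gamma\operatorname{div}u$ and $\nabla\varphi\cdot\rho^\gamma u$ in \eqref{equ-local energy}.

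\textbf{Time derivative term.} This is the main obstacle. We write
\begin{equation*}
\partial_t(\rho u)_\varepsilon^\varepsilon=\big[\partial_t(\rho u^\varepsilon)_\varepsilon-\partial_t(\rho u^\varepsilon_\varepsilon)\big]+\partial_t(\rho u_\varepsilon^\varepsilon)+\big[\partial_t(\rho u)^\varepsilon-\partial_t(\rho u^\varepsilon)\big]_\varepsilon .
\end{equation*}
\begin{itemize}
\item The first bracket is handled by Lemma \ref{commutator} with $f=\rho$, $g=u^\varepsilon$ and test function $\varphi u_\varepsilon^\varepsilon$. The mass equation gives $\partial_t\rho=-\operatorname{div}(\rho u)\in L^pW^{-1,q}$. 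The factors $g$ and $\varphi u_\varepsilon^\varepsilon$ lie in $L^2W^{1,2}\cap L^pL^q$, and the exponents close via \eqref{con:shinbrot_cond}, which is exactly where $2/p+2/q\le1$ is used.
\item Because $g=u^\varepsilon$ depends on $\varepsilon$, we do not apply the lemma to a fixed $g$. Instead we rerun its proof, using the uniform bound from the lemma together with the density argument and the fact that $u^\varepsilon\to u$ strongly in $L^2H^1$. We expect this to be the most delicate point. If it fails, the fallback is to split $u^\varepsilon=u+(u^\varepsilon-u)$, apply the lemma to the fixed $g=u$, and control the remainder by the uniform bound.
\item The third bracket contains the spatial commutator $\partial_t[(\rho u)^\varepsilon-\rho u^\varepsilon]$. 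Using the equation, we rewrite it in terms of the spatial flux terms and handle it with a Lions-type spatial commutator estimate, where $\nabla\rho=2\sqrt\rho\nabla\sqrt\rho\in L^\infty L^{3/2}$ supplies the needed derivative.
\item The middle term is treated exactly: $\int\varphi u_\varepsilon^\varepsilon\partial_t(\rho u_\varepsilon^\varepsilon)=-\frac12\int\partial_t\varphi\,\rho|u_\varepsilon^\varepsilon|^2+\frac12\int\varphi\,\partial_t\rho\,|u_\varepsilon^\varepsilon|^2$. The last integral is rewritten through the mass equation as $-\frac12\int\nabla(\varphi|u_\varepsilon^\varepsilon|^2)\cdot\rho u$ and then combined with the convective term.
\end{itemize}

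\textbf{Convective term.} We integrate by parts and replace $(\rho u\otimes u)_\varepsilon^\varepsilon$ by $\rho u\otimes u_\varepsilon^\varepsilon$ modulo a commutator. The main part combines with the mass-equation remainder above into $-\frac12\int\nabla\varphi\cdot\rho u|u_\varepsilon^\varepsilon|^2$. The commutator $(\rho u\otimes u)^\varepsilon_\varepsilon-\rho u\otimes u^\varepsilon_\varepsilon$ is paired against $\nabla(\varphi u_\varepsilon^\varepsilon)\in L^2L^2$. It tends to $0$ in $L^2L^2$ by strong convergence of mollifiers, since $\rho u\otimes u\in L^{p/2}L^{q/2}$ and $u\in L^pL^q$. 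For $4\le q$ we use $p/2\ge2$ and $q/2\ge2$ via \eqref{con:shinbrot_cond}. For $3\le q<4$ we interpolate with $u\in L^2L^6$ from $H^1_0$, which is where $1/p+3/q\le1$ enters. The limit of the main part is $\frac12\int\nabla\varphi\cdot\rho u|u|^2$, using $\rho|u|^3\in L^1$, which follows from the same integrability.

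Collecting all limits yields \eqref{equ-local energy}.
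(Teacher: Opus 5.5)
Your skeleton matches the paper's: the test function $(\varphi u_\varepsilon^\varepsilon)_\varepsilon^\varepsilon$, the three-way splitting of the time term, Lemma~\ref{commutator} for $\partial_t(\rho u^\varepsilon)_\varepsilon-\partial_t(\rho u^\varepsilon_\varepsilon)$, and the exact computation of the middle term. Your splitting $u^\varepsilon=u+(u^\varepsilon-u)$ is a sound way to handle the $\varepsilon$-dependence of $g$ and of the test function, which the paper does not address.

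\textbf{The gap: the third bracket.} Write
\begin{equation*}
M_1=\int_0^T\!\int_\Omega\varphi u_\varepsilon^\varepsilon\,[\partial_t(\rho u)^\varepsilon-\partial_t(\rho u^\varepsilon)]_\varepsilon .
\end{equation*}
Since $\partial_t[(\rho u)^\varepsilon-\rho u^\varepsilon]=(\partial_t(\rho u))^\varepsilon-u^\varepsilon\partial_t\rho-\rho\,\partial_t u^\varepsilon$, the equations turn the first two pieces into fluxes, but nothing controls $\rho\,\partial_t u^\varepsilon$. There is no evolution equation for $u$ alone (vacuum is allowed, so you cannot divide by $\rho$), and $\partial_t u$ has no usable regularity. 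So the rewriting ``in terms of the spatial flux terms'' cannot be carried out, and a Lions-type commutator has nothing to act on.

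The missing idea is to keep $\partial_t$ on the outer time kernel:
\begin{equation*}
[\partial_t(\rho u)^\varepsilon-\partial_t(\rho u^\varepsilon)]_\varepsilon=\bigl((\rho u)^\varepsilon-\rho u^\varepsilon\bigr)*\eta_{t,\varepsilon}',\qquad (\rho u)^\varepsilon-\rho u^\varepsilon=\int_{B(0,\varepsilon)}\bigl(\rho(x-y)-\rho(x)\bigr)u(x-y)\,\eta_{x,\varepsilon}(y)\,\mathrm{d}y .
\end{equation*}
Because $|y|\le\varepsilon$ and $\nabla\rho=2\sqrt\rho\,\nabla\sqrt\rho\in L^\infty L^{3/2}$, the spatial commutator is $O(\varepsilon)$ in $L^2L^{6/5}$, while $\|\eta_{t,\varepsilon}'\|_{L^1}=O(\varepsilon^{-1})$. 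Since the same $\varepsilon$ is used in space and time, these cancel and give $|M_1|\le C\|u\|_{L^2L^6}^2\|\nabla\rho\|_{L^\infty L^{3/2}}$ uniformly in $\varepsilon$.

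This only gives boundedness. Convergence to $0$ then follows by replacing $u$ with smooth $u_n$; the error is controlled by the same bound with $\|u-u_n\|_{L^2L^6}$. For $u_n$ the time derivative can be distributed: $(\rho\partial_tu_n)^\varepsilon-\rho\,\partial_tu_n^\varepsilon\to0$ by standard mollifier properties, and $(u_n\partial_t\rho)^\varepsilon-u_n^\varepsilon\partial_t\rho$ is handled by $W^{-1}$ duality.

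\textbf{Exponent bookkeeping.} Your exponents do not close away from the point $(4,4)$. On $2/p+2/q=1$ with $q>4$ one has $p<4$ (e.g.\ $(p,q)=(3,6)$). Then:
\begin{itemize}
\item $\rho u\otimes u\in L^{p/2}L^{q/2}$ is not in $L^2L^2$;
\item $\partial_t\rho\in L^pW^{-1,q}$ does not pair with $\nabla u^\varepsilon\,\varphi u_\varepsilon^\varepsilon+u^\varepsilon\nabla(\varphi u_\varepsilon^\varepsilon)$. With $\nabla u\in L^2L^2$ and $u\in L^pL^q$, on this line the pairing requires $\partial_t\rho\in L^qW^{-1,p}$, with the exponents swapped.
\end{itemize}
What rescues both is the energy bound $\rho u=\sqrt\rho\,(\sqrt\rho u)\in L^\infty L^2$. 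Interpolating it with $\rho u\in L^pL^q$ (weight $p/2-1$ on the latter) gives $\rho u\in L^qL^p$. Hence $\rho u\otimes u\in L^2L^2$, $\rho|u|^3\in L^1$ and $\partial_t\rho=-\operatorname{div}(\rho u)\in L^qW^{-1,p}$. This is the interpolation behind the paper's factor $\|u\|^{p/2-1}_{L^pL^q}\|\rho u\|^{2-p/2}_{L^\infty L^2}$ and its $\partial_t\rho\in L^rW^{-1,s}$.

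Without it, even with $M_1$ repaired, your argument reaches only $L^4L^4$. Via the embedding into $L^4L^4$ it also covers the branch $3\le q<4$, but not the new part of the Shinbrot range in \eqref{con:shinbrot_cond}.
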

\begin{proof}

We only need to treat the cases that $2<p\leq4$. In fact, we have the following embedding
$$
L^{p}(0,T;L^{q}(\Omega))\cap L^{2}(0,T;H^{1}(\Omega))\hookrightarrow L^{4}(0,T;L^{4}(\Omega)),\quad\mbox{where}\quad \frac{1}{p}+\frac{3}{q}=1\quad\mbox{and}\quad p>4.
$$
Multiplying \eqref{eq:comNS} by test function $(\varphi u_{\varepsilon}^{\varepsilon})_{\varepsilon}^{\varepsilon}$, then integrating over $(0, T) \times \Omega$, we infer that
\begin{equation}\label{eq:thm_1}
	\int_0^T \int_{\Omega} \varphi u_{\varepsilon}^{\varepsilon}\left[\partial_t(\rho u)_{\varepsilon}^{\varepsilon}+\operatorname{div}(\rho u \otimes u)_{\varepsilon}^{\varepsilon}+\nabla P(\rho)_{\varepsilon}^{\varepsilon}-\mu \Delta u_{\varepsilon}^{\varepsilon}-(\mu+\lambda) \nabla(\operatorname{div} u)_{\varepsilon}^{\varepsilon}\right]=0, 
	\end{equation}
where we used the fact $\eta(-t,-x)=\eta(t, x)$. In the rest of this step, we will reformulate each term of the last equation.

We first show that
$$
\begin{aligned}
	& \lim _{\varepsilon \rightarrow 0} \int_0^T \int_{\Omega} \varphi u_{\varepsilon}^{\varepsilon} \partial_t(\rho u)_{\varepsilon}^{\varepsilon}\mathrm{d} x \mathrm{d} t
	 +\lim _{\varepsilon \rightarrow 0} \int_0^T \int_{\Omega} \varphi u_{\varepsilon}^{\varepsilon}\operatorname{div}(\rho u \otimes u)_{\varepsilon}^{\varepsilon} \mathrm{d} x \mathrm{d} t\\
 =&	-\frac{1}{2} \int_0^T \int_{\Omega} \varphi_{t} \rho|u|^2\mathrm{d} x \mathrm{d} t-\frac{1}{2} \int_0^T \int_{\Omega}  \rho u \cdot \nabla \varphi|u|^2 \mathrm{d} x \mathrm{d} t.
\end{aligned}
$$
Firstly, for the time derivative term, we have 
$$
\begin{aligned}
	& \int_0^T \int_{\Omega} \varphi u_{\varepsilon}^{\varepsilon} \partial_t(\rho u)_{\varepsilon}^{\varepsilon}\mathrm{d} x \mathrm{d} t \\
	=&\int_0^T \int_{\Omega} \varphi u_{\varepsilon}^{\varepsilon} \left[\partial_t(\rho u)_{\varepsilon}^{\varepsilon}-\partial_t(\rho u_{\varepsilon}^{\varepsilon})\right] \mathrm{d} x \mathrm{d} t+\int_0^T \int_{\Omega} \varphi u_{\varepsilon}^{\varepsilon} \partial_t(\rho u_{\varepsilon}^{\varepsilon})\mathrm{d} x \mathrm{d} t  \\
	 =:& I_1+\int_0^T \int_{\Omega} \varphi u_{\varepsilon}^{\varepsilon} \partial_t\left(\rho u_{\varepsilon}^{\varepsilon}\right)\mathrm{d} x \mathrm{d} t.
\end{aligned}
$$
A straightforward computation gives
   $$
		\begin{aligned}
	I_1
	=&\int_0^T \int_{\Omega} \varphi u_{\varepsilon}^{\varepsilon} \left[\partial_t(\rho u)_{\varepsilon}^{\varepsilon}-\partial_t(\rho u^{\varepsilon})_{\varepsilon}\right] \mathrm{d} x \mathrm{d} t+\int_0^T \int_{\Omega} \varphi u_{\varepsilon}^{\varepsilon} \left[\partial_t(\rho u^{\varepsilon})_{\varepsilon}-\partial_t(\rho u_{\varepsilon}^{\varepsilon})\right] \mathrm{d} x \mathrm{d} t\\
	=&:M_1+M_2 .
		\end{aligned}
	$$
Let us show $\lim _{\varepsilon \rightarrow 0} I_{1}=0$. \\

        For any weak solution $(\rho, u)$  in the sense of Definition \ref{defcomNS}, with additional condition \eqref{condi2-th2}, then there exists some $\alpha \in(0,1)$ such that  $\partial_{t} \rho \in L^{r}(0, T ; W^{-1,s}(\Omega))$, for any
        $$\frac{1}{r}=\frac{\alpha}{p}\text{ and } \frac{1}{s}=\frac{\alpha}{q}+\frac{1-\alpha}{2}.$$

    Note that
$ \rho u \in L^{\infty}(0, T ; L^2(\Omega)) \text{ and } u \in L^p\left(0, T ; L^q(\Omega)\right)$, 
making use of $L^p-L^q$ interpolation inequality,we deduce that there exists some $\alpha \in(0,1)$ such that
$$\rho u \in L^{r}(0, T ; L^{s}(\Omega)) \text{ with }\frac{1}{r}=\frac{\alpha}{p},\quad \frac{1}{s}=\frac{\alpha}{q}+\frac{1-\alpha}{2}.$$
By the mass equation $\eqref{eq:comNS}_{2}$, one has 
$$\partial_{t} \rho \in L^{r}(0, T ; W^{-1,s}(\Omega)) .$$

For $M_1$, we have
    $$
    \begin{aligned}
    &\int_0^T \int_{\Omega} \varphi u_{\varepsilon}^{\varepsilon} \left[\partial_t(\rho u)_{\varepsilon}^{\varepsilon}-\partial_t(\rho u^{\varepsilon})_{\varepsilon}\right] \mathrm{d} x \mathrm{d} t\\
    =&\int_0^T \int_{\Omega} \varphi u_{\varepsilon}^{\varepsilon} \left[\left((\rho u)\ast\eta_{x,\varepsilon}-\rho\cdot u\ast\eta_{x,\varepsilon}\right)\ast \eta_{t,\varepsilon}'\right] \mathrm{d} x \mathrm{d} t\\
          =&\int_0^T \int_{\Omega} \varphi u_{\varepsilon}^{\varepsilon} \left[\int_{\boldsymbol{B}(0, \varepsilon)} (\rho(t,x-y)-\rho(t,x))u(t,x-y)\eta_{x,\varepsilon}(y)\mathrm{d} y \ast \eta_{t,\varepsilon}'\right] \mathrm{d} x \mathrm{d} t\\
        =&\int_0^T \int_{\Omega} \varphi u_{\varepsilon}^{\varepsilon} \left[\int_{-\varepsilon}^{\varepsilon} \int_{\boldsymbol{B}(0, \varepsilon)} \int_{0}^{1} \nabla \rho(t-s,x-\tau y)\cdot y \mathrm{d}\tau u(t-s,x-y)\eta_{x,\varepsilon}(y)\eta_{t,\varepsilon}'(s)\mathrm{d} y \mathrm{d} s \right] \mathrm{d} x \mathrm{d} t\\
        =&\int_0^T \int_{\Omega} \varphi u_{\varepsilon}^{\varepsilon} \left[\int_{-\varepsilon}^{\varepsilon} \int_{\boldsymbol{B}(0, \varepsilon)} \int_{0}^{1} \nabla \rho(t-s,x-\tau y)\cdot y  u(t-s,x-y)\eta_{x,\varepsilon}(y)\frac{1}{\varepsilon^{2}}\partial_{t}\eta_{t}(\frac{s}{\varepsilon})\mathrm{d}\tau\mathrm{d} y \mathrm{d} s \right] \mathrm{d} x \mathrm{d} t\\
        =&  \int_{-\varepsilon}^{\varepsilon} \int_{\boldsymbol{B}(0, \varepsilon)} \int_{0}^{1}\int_0^T \int_{\Omega}\varphi u_{\varepsilon}^{\varepsilon} \nabla \rho(t-s,x-\tau y)\cdot y  u(t-s,x-y)\mathrm{d} x \mathrm{d} t\eta_{x,\varepsilon}(y)\frac{1}{\varepsilon^{2}}\partial_{t}\eta_{t}(\frac{s}{\varepsilon})\mathrm{d}\tau\mathrm{d} y \mathrm{d} s\\
    	\leq&  \int_{-\varepsilon}^{\varepsilon} \int_{\boldsymbol{B}(0, \varepsilon)} \int_{0}^{1}\int_0^T \int_{\Omega}\varphi u_{\varepsilon}^{\varepsilon} \nabla \rho(t-s,x-\tau y)  u(t-s,x-y)\mathrm{d} x \mathrm{d} t\eta_{x,\varepsilon}(y)\frac{1}{\varepsilon}\partial_{t}\eta_{t}(\frac{s}{\varepsilon})\mathrm{d}\tau\mathrm{d} y \mathrm{d} s  \\
        \leq &\int_{-\varepsilon}^{\varepsilon} \int_{\boldsymbol{B}(0, \varepsilon)}\int_{0}^{1} \|u_{\varepsilon}^{\varepsilon}\|_{L^{2}(L^{6})}\| \nabla \rho(t-s,x-\tau y)\|_{L^{\infty}(L^{\frac{3}{2}})}  \|u(t-s,x-y)\|_{L^{2}(L^{6})}\eta_{x,\varepsilon}(y)\frac{1}{\varepsilon} \partial_{t}\eta_{t}(\frac{s}{\varepsilon})\mathrm{d}\tau\mathrm{d} y \mathrm{d} s,
    \end{aligned}
    $$
    hence
    $$
    \begin{aligned}
        M_{1}\leq & \|u\|_{L^{2}(L^{6})}\| \nabla \rho\|_{L^{\infty}(L^{\frac{3}{2}})}  \|u\|_{L^{2}(L^{6})} \int_{-\varepsilon}^{\varepsilon} \int_{\boldsymbol{B}(0, \varepsilon)}\int_{0}^{1} \eta_{x,\varepsilon}(y)\frac{1}{\varepsilon} \partial_{t}\eta_{t}(\frac{s}{\varepsilon})\mathrm{d}\tau\mathrm{d} y \mathrm{d} s\\
        \leq &C\|u\|_{L^{2}(L^{6})}\| \nabla \rho\|_{L^{\infty}(L^{\frac{3}{2}})}  \|u\|_{L^{2}(L^{6})}.
    \end{aligned}
	$$
    Furthermore,let $\left\{u_n\right\} \in C_0^{\infty}((0,T)\times\Omega)$ with $u_n \rightarrow u$ strongly in $L^{p}(L^{q}) \cap L^{2}(W^{1,2})$. Thus, by the density arguments and properties of the standard mollifiers, we arrive at
    $$
    \begin{aligned}
        &\int_0^T \int_{\Omega} \varphi u_{\varepsilon}^{\varepsilon} \left[\partial_t(\rho u)_{\varepsilon}^{\varepsilon}-\partial_t(\rho u^{\varepsilon})_{\varepsilon}\right] \mathrm{d} x \mathrm{d} t\\
        =&\int_0^T \int_{\Omega} \varphi u_{\varepsilon}^{\varepsilon} \left[\partial_t(\rho (u-u_{n}))_{\varepsilon}^{\varepsilon}-\partial_t(\rho (u-u_{n})^{\varepsilon})_{\varepsilon}\right] \mathrm{d} x \mathrm{d} t+\int_0^T \int_{\Omega} \varphi u_{\varepsilon}^{\varepsilon} \left[\partial_t(\rho u_{n})_{\varepsilon}^{\varepsilon}-\partial_t(\rho u_{n}^{\varepsilon})_{\varepsilon}\right] \mathrm{d} x \mathrm{d} t\\
        =&\int_0^T \int_{\Omega} \varphi u_{\varepsilon}^{\varepsilon} \left[\partial_t(\rho (u-u_{n}))_{\varepsilon}^{\varepsilon}-\partial_t(\rho (u-u_{n})^{\varepsilon})_{\varepsilon}\right] \mathrm{d} x \mathrm{d} t+\int_0^T \int_{\Omega} \left(\varphi u_{\varepsilon}^{\varepsilon}\right)_{\varepsilon} \left[\partial_t(\rho u_{n})^{\varepsilon}-\partial_t(\rho u_{n}^{\varepsilon})\right] \mathrm{d} x \mathrm{d} t\\
        =&\int_0^T \int_{\Omega} \varphi u_{\varepsilon}^{\varepsilon} \left[\partial_t(\rho (u-u_{n}))_{\varepsilon}^{\varepsilon}-\partial_t(\rho (u-u_{n})^{\varepsilon})_{\varepsilon}\right] \mathrm{d} x \mathrm{d} t+\int_0^T \int_{\Omega} \left(\varphi u_{\varepsilon}^{\varepsilon}\right)_{\varepsilon} \left[(\rho \partial_tu_{n})^{\varepsilon}-(\rho \partial_tu_{n}^{\varepsilon})\right] \mathrm{d} x \mathrm{d} t\\
        &+\int_0^T \int_{\Omega} \left(\varphi u_{\varepsilon}^{\varepsilon}\right)_{\varepsilon} \left[(\partial_t\rho u_{n})^{\varepsilon}-(\partial_t\rho u_{n}^{\varepsilon})\right] \mathrm{d} x \mathrm{d} t.
    \end{aligned}
    $$
    Then,
    $$
    \begin{aligned}
    M_{1}
       =&\int_0^T \int_{\Omega} \varphi u_{\varepsilon}^{\varepsilon} \left[\partial_t(\rho (u-u_{n}))_{\varepsilon}^{\varepsilon}-\partial_t(\rho (u-u_{n})^{\varepsilon})_{\varepsilon}\right] \mathrm{d} x \mathrm{d} t+\int_0^T \int_{\Omega} \left(\varphi u_{\varepsilon}^{\varepsilon}\right)_{\varepsilon} \left[(\rho \partial_tu_{n})^{\varepsilon}-(\rho \partial_tu_{n}^{\varepsilon})\right] \mathrm{d} x \mathrm{d} t\\
        &+\int_0^T \langle \partial_t\rho,(\varphi u_{\varepsilon}^{\varepsilon})^{\varepsilon}_{\varepsilon}u_{n} -(\varphi u_{\varepsilon}^{\varepsilon})_{\varepsilon}u_{n}\rangle  +\langle \partial_t\rho,(\varphi u_{\varepsilon}^{\varepsilon})_{\varepsilon}u_{n}^{\varepsilon}-(\varphi u_{\varepsilon}^{\varepsilon})_{\varepsilon}u_{n}\rangle \mathrm{d} x \mathrm{d} t\\  
        \leq& C\|u\|_{L^{2}(L^{6})}\| \nabla \rho\|_{L^{\infty}(L^{\frac{3}{2}})}  \|u-u_{n}\|_{L^{2}(L^{6})}+\|u\|_{L^{2}(L^{6})}\|(\rho \partial_tu_{n})^{\varepsilon}-(\rho \partial_tu_{n}^{\varepsilon})\|_{L^{2}(L^{\frac{6}{5}})}\\
        &+\| \partial_{t}\rho \|_{L^{r}(W^{-1,s})}\| u_{n}\|_{L^{2}(W^{1,2})}\|(\varphi u_{\varepsilon}^{\varepsilon})_{\varepsilon}^{\varepsilon}-(\varphi u_{\varepsilon}^{\varepsilon})_{\varepsilon}\|_{L^{p}(L^{q})}\\
        &+\| \partial_{t}\rho \|_{L^{r}(W^{-1,s})}\|u_{n}\|_{L^{p}(L^{q})}\| (\varphi u_{\varepsilon}^{\varepsilon})_{\varepsilon}^{\varepsilon}- (\varphi u_{\varepsilon}^{\varepsilon})_{\varepsilon}\|_{L^{2}(W^{1,2})}\\
&+\| \partial_{t}\rho \|_{L^{r}(W^{-1,s})}\| u_n- u_{n}^{\varepsilon}\|_{L^{2}(W^{1,2})}\|(\varphi u_{\varepsilon}^{\varepsilon})_{\varepsilon}\|_{L^{p}(L^{q})}\\
&+\| \partial_{t}\rho \|_{L^{r}(W^{-1,s})}\|u_n-u_{n}^{\varepsilon}\|_{L^{p}(L^{q})} \| (\varphi u_{\varepsilon}^{\varepsilon})_{\varepsilon}\|_{L^{2}(W^{1,2})}\\
&\rightarrow 0, \text{ as } \varepsilon \rightarrow 0,
    \end{aligned}$$
    provided that $\frac{1}{p}+\frac{1}{q} \leq \frac{1}{2},2<p\leq 4 $.

    For $M_2$, according to Lemma \ref{commutator} by  considering $f=\rho,g=u^{\varepsilon},\varphi=\varphi u_{\varepsilon}^{\varepsilon}$, we end up with
    \begin{equation}
    |M_2| = \left|\int_0^T \int_{\Omega} \varphi u_{\varepsilon}^{\varepsilon} \left[\partial_t(\rho u^{\varepsilon})_{\varepsilon}-\partial_t(\rho u_{\varepsilon}^{\varepsilon})\right] \mathrm{d} x \mathrm{d} t\right| \rightarrow 0,\text{ as }\varepsilon \rightarrow 0.
\end{equation}

Secondly, the convection term can be treated as
$$
\begin{aligned}
	\int_{0}^{T} \int_{\Omega} \varphi u_{\varepsilon}^{\varepsilon}\operatorname{div}(\rho u \otimes u)_{\varepsilon}^{\varepsilon}\mathrm{d}x\mathrm{d}t  = &\int_{0}^{T} \int_{\Omega} \varphi u_{\varepsilon}^{\varepsilon} \left( \operatorname{div}(\rho u \otimes u)_{\varepsilon}^{\varepsilon} - \operatorname{div}(\rho u \otimes u_{\varepsilon}^{\varepsilon}) \right) \mathrm{d}x\mathrm{d}t\\
	& + \int_{0}^{T} \int_{\Omega} \varphi u_{\varepsilon}^{\varepsilon}\operatorname{div}(\rho u \otimes u_{\varepsilon}^{\varepsilon})\mathrm{d}x\mathrm{d}t \\
	 =&: I_2 + \int_{0}^{T} \int_{\Omega} \varphi u_{\varepsilon}^{\varepsilon}\operatorname{div}(\rho u \otimes u_{\varepsilon}^{\varepsilon})\mathrm{d}x\mathrm{d}t.
\end{aligned}
$$
We claim that $\lim_{\varepsilon \to 0} I_2 = 0$. In fact, making use of the integration by parts, one has
$$
I_2 =-\int_{0}^{T} \int_{\Omega}  \nabla(\varphi u_{\varepsilon}^{\varepsilon}) \cdot \left((\rho u \otimes u)_{\varepsilon}^{\varepsilon} - (\rho u \otimes u_{\varepsilon}^{\varepsilon}) \right)\mathrm{d}x\mathrm{d}t.
$$
Due to the properties of the mollifier and H\"{o}lder inequality,
$$
\begin{aligned}
	|I_{2}| &\leq C(\|\nabla u\|_{L^{2}(L^{2})}+\| u\|_{L^{2}(L^{6})})\|(\rho u \otimes u)_{\varepsilon}^{\varepsilon} -\rho u \otimes u\|_{L^{2}(L^{2})} \\
	&\quad +C(\|\nabla u\|_{L^{2}(L^{2})}+\| u\|_{L^{2}(L^{6})})\|u\|^{\frac{p}{2}-1}_{L^{p}(L^{q})} \|\rho u\|^{2-\frac{p}{2}}_{L^{\infty}(L^{2})}\|u-u_{\varepsilon}^{\varepsilon}\|_{L^{p}(L^{q})}\\
	&\rightarrow 0, \text{ as }\varepsilon \rightarrow 0,
\end{aligned}
$$
provided that $\frac{1}{p}+\frac{1}{q} \leq \frac{1}{2},2<p\leq 4 $.

Finally, by the mass equation, a simple computation gives
\begin{equation}\label{equ-}
	\begin{aligned}
		 &\quad \int_0^T \int_{\Omega} \varphi u_{\varepsilon}^{\varepsilon} \partial_t\left(\rho u_{\varepsilon}^{\varepsilon}\right)\mathrm{d}x\mathrm{d}t+\int_{0}^{T} \int_{\Omega} \varphi u_{\varepsilon}^{\varepsilon}\operatorname{div}(\rho u \otimes u_{\varepsilon}^{\varepsilon})\mathrm{d}x\mathrm{d}t\\
		 &=  \int_0^T \int \varphi \rho \partial_t \frac{\left|u_{\varepsilon}^{\varepsilon}\right|^2}{2} \mathrm{d}x\mathrm{d}t
		 +\int_0^T \int \varphi \rho_t\left|u_{\varepsilon}^{\varepsilon}\right|^2\mathrm{d}x\mathrm{d}t + \frac{1}{2}\int_0^T \int \varphi\operatorname{div}(\rho u\left|u_{\varepsilon}^{\varepsilon}\right|^2)\mathrm{d}x\mathrm{d}t\\
		 &=-\frac{1}{2}\int_0^T \int \varphi_{t} \rho\left|u_{\varepsilon}^{\varepsilon}\right|^2\mathrm{d}x\mathrm{d}t
		 -\frac{1}{2}\int_0^T \int  \nabla \varphi\rho u\left|u_{\varepsilon}^{\varepsilon}\right|^2\mathrm{d}x\mathrm{d}t\\
		 &\rightarrow -\frac{1}{2}\int_0^T \int \varphi_{t} \rho\left|u\right|^2\mathrm{d}x\mathrm{d}t
		 -\frac{1}{2}\int_0^T \int  \nabla \varphi\rho u\left|u\right|^2\mathrm{d}x\mathrm{d}t, \text{ as }\varepsilon \rightarrow 0.
	\end{aligned}
\end{equation}
Combining the limits for $I_1$ and $I_2$ with the final calculation establishes the claim for the inertial terms.

Next, we show that
$$
	\lim_{\varepsilon\to 0}\int_{0}^{T}\int_{\Omega}\varphi u_{\varepsilon}^{\varepsilon}\nabla(\rho^{\gamma})_{\varepsilon}^{\varepsilon}\mathrm{d}x\mathrm{d}t = \int_{0}^{T}\int_{\Omega}\varphi u\cdot\nabla\rho^{\gamma} \mathrm{d}x\mathrm{d}t.
$$
We write
$$
	\begin{aligned}
		\int_{0}^{T}\int_{\Omega}\varphi u_{\varepsilon}^{\varepsilon}\nabla(\rho^{\gamma})_{\varepsilon}^{\varepsilon}\mathrm{d}x\mathrm{d}t &=\int_{0}^{T}\int_{\Omega}\varphi u_{\varepsilon}^{\varepsilon}\left(\nabla(\rho^{\gamma})_{\varepsilon}^{\varepsilon}-\nabla\rho^{\gamma}\right)\mathrm{d}x\mathrm{d}t+\int_{0}^{T}\int_{\Omega}\varphi u_{\varepsilon}^{\varepsilon}\cdot\nabla\rho^{\gamma}\mathrm{d}x\mathrm{d}t\\
		&=:I_{3}+\int_{0}^{T}\int_{\Omega}\varphi u_{\varepsilon}^{\varepsilon}\cdot\nabla\rho^{\gamma}\mathrm{d}x\mathrm{d}t.
	\end{aligned}
$$
Given that $\nabla\rho^{\gamma}\in L^{\infty}(L^{\frac{3}{2}})$, we know $\lim_{\varepsilon \to 0} \| \nabla((\rho^\gamma)_{\varepsilon}^{\varepsilon} - \rho^\gamma) \|_{L^2(L^{\frac{3}{2}})} = 0$. Hence,
$$
\lim_{\varepsilon \to 0} |I_3| \leq C \lim_{\varepsilon \to 0} \| u \|_{L^{2}(L^{6})} \|\nabla((\rho^\gamma)_{\varepsilon}^{\varepsilon} - \rho^\gamma) \|_{L^{2}(L^{\frac{3}{2}})} = 0.
$$

For the diffusion terms, we show that
$$
\begin{aligned}
	& \lim_{\varepsilon \to 0} \int_0^T \int_\Omega \varphi  u_{\varepsilon}^{\varepsilon} (\mu \Delta u_{\varepsilon}^{\varepsilon} + (\mu + \lambda) \nabla \mathrm{div} u_{\varepsilon}^{\varepsilon}) \mathrm{d}x\mathrm{d}t \\
	& = -\int_0^T \int_\Omega \varphi  (\mu |\nabla u|^2 + (\mu + \lambda)(\mathrm{div} u)^2) \mathrm{d}x\mathrm{d}t - \int_0^T \int_\Omega  \nabla \varphi \cdot (\mu u \nabla u + (\mu + \lambda) u \mathrm{div} u) \mathrm{d}x\mathrm{d}t.
\end{aligned}
$$
Making use of the integration by parts and the fact that $\nabla u \in L^{2}(L^{2})$, one has
$$
\begin{aligned}
	&-\int_0^T \int_\Omega \varphi u_{\varepsilon}^{\varepsilon}(\mu \Delta u_{\varepsilon}^{\varepsilon} + (\mu + \lambda) \nabla \mathrm{div} u_{\varepsilon}^{\varepsilon} )\mathrm{d}x\mathrm{d}t\\
	=&\int_0^T \int_\Omega \varphi (\mu |\nabla u_{\varepsilon}^{\varepsilon}|^2 + (\mu + \lambda)(\mathrm{div} u_{\varepsilon}^{\varepsilon})^2)\mathrm{d}x\mathrm{d}t + \int_0^T \int_\Omega  \nabla \varphi \cdot (\mu u_{\varepsilon}^{\varepsilon} \nabla u_{\varepsilon}^{\varepsilon}+ (\mu + \lambda) u_{\varepsilon}^{\varepsilon} \mathrm{div} u_{\varepsilon}^{\varepsilon})\mathrm{d}x\mathrm{d}t\\
	\rightarrow & \int_0^T \int_\Omega \varphi (\mu |\nabla u|^2 + (\mu + \lambda)(\mathrm{div} u)^2)\mathrm{d}x\mathrm{d}t + \int_0^T \int_\Omega  \nabla \varphi \cdot (\mu u \nabla u + (\mu + \lambda) u \mathrm{div} u)\mathrm{d}x\mathrm{d}t,
\end{aligned}
$$
as $\varepsilon \rightarrow 0$. This proves the claim for the diffusion terms.

Letting $\varepsilon$ go and to zero in \eqref{eq:thm_1}, and using \eqref{equ-} and what we have proved is that in the limit, then we complete the proof of Theorem \ref{localenergy}.
\end{proof}

\section{Global energy equality of the compressible Navier-Stokes equations with constant viscosity}
Next, we extend the result of Theorem \ref{localenergy} to the global energy equality to prove the Theorem \ref{theoremcomNS}.
For the case of a bounded domain with the no-slip boundary condition $u=0$ on $\partial\Omega$, we introduce a test function $\varphi=\psi_{\tau}\phi_{\delta}.$ We fix small constants $\tau>0, \delta>0$, and define the cut-off functions $\psi_\tau(t) \in C_0^1((\tau, T-\tau))$ and $\phi_\delta(x) \in C_0^1(\Omega)$ satisfying
$$
\left\{\begin{array}{l}
	0 \leq \phi_\delta(x) \leq 1, \quad \phi_\delta(x)=1 \text { if } x \in \Omega^{\delta}=:\{x \in \Omega \text { and } \operatorname{dist}(x, \partial \Omega) \geq \delta\} , \\
	\phi_\delta \rightarrow 1 \text { as } \delta \rightarrow 0,  \text { and }\left|\nabla \phi_\delta\right| \leq \frac{2}{\operatorname{dist}(x, \partial \Omega)}.
\end{array}\right.
$$
In view of Theorem \ref{localenergy}, we obtain
\begin{equation}\label{eq:delta}
	\begin{split}
		&\frac{1}{2}\int_0^T \int_{\Omega}\psi_{\tau}^{\prime}\phi_{\delta} \rho |u |^{2}\mathrm{d} x \mathrm{d} t+\int_0^T \int_{\Omega}\psi_{\tau}\phi_{\delta} \left[\rho^{\gamma}{\rm div}u-\mu|\nabla u|^{2}-(\mu+\lambda)({\rm div}u)^{2}\right] \mathrm{d} x \mathrm{d} t\\
		&\quad+\int_0^T \int_{\Omega}\psi_{\tau}\nabla \phi_{\delta} \cdot\left[\frac{1}{2}(\rho u) |u |^{2}+\rho^{\gamma}u -\mu u\nabla u-(\mu+\lambda)u {\rm div}u\right]\mathrm{d} x \mathrm{d} t=0.
	\end{split}
	\end{equation}

Next, we take the limit of \eqref{eq:delta} as $\delta \to 0$. Since $\phi_\delta \to 1$ pointwise and is bounded, the terms without $\nabla\phi_\delta$ converge. The terms involving $\nabla\phi_\delta$ vanish because $\nabla\phi_\delta$ is supported in $\Omega \setminus \Omega^\delta$ where the measure goes to zero. Specifically,
$$
\begin{aligned}
	&\quad \left| \int_0^T \int_{\Omega}\psi_{\tau}\nabla \phi_{\delta} \cdot\left[\frac{1}{2}(\rho u) |u |^{2}+\rho^{\gamma}u -\mu u\nabla u-(\mu+\lambda)u {\rm div}u\right]\mathrm{d} x \mathrm{d} t \right| \\
	& \leq C\|\nabla u\|_{L^{2}(L^{2})}\|\rho u\|^{2-\frac{p}{2}}_{L^{\infty}(L^{2}(\Omega \setminus\Omega^{\delta}))} \|u\|^{\frac{p}{2}}_{L^{p}(L^{q}(\Omega \setminus\Omega^{\delta}))}\\
	&\quad +C\|\nabla u\|_{L^{2}(L^{2})}(\|\rho \|_{L^{2}(L^{2}(\Omega \setminus\Omega^{\delta}))}+\|\nabla u\|_{L^{2}(L^{2}(\Omega \setminus\Omega^{\delta}))})\\
	&\rightarrow 0, \text{ as } \delta \rightarrow 0.
\end{aligned}
$$
Thus, after letting $\delta \to 0$ in \eqref{eq:delta}, it holds that
\begin{equation}\label{eq:boundedlocalenrygy}
    \frac{1}{2}\int_{0}^{T}\int_{\Omega}\psi_{\tau}^{\prime}\rho|u|^{2}\mathrm{d}x\mathrm{d}t+\int_{0}^{T}\int_{\Omega}\psi_{\tau}\rho^{\gamma}\mathrm{div}u-\psi_{\tau}\left(\mu|\nabla u|^{2}+(\mu+\lambda)(\mathrm{div}u)^{2}\right)\mathrm{d}x\mathrm{d}t=0.
\end{equation}

On the other hand, it follows from the continuity equation that
$$
	\int_{0}^{T}\int_{\Omega}\psi_{\tau}\rho^{\gamma}\mathrm{div}u \mathrm{d}x\mathrm{d}t =-\int_{0}^{T}\int_{\Omega}\psi_{\tau}\rho^{\gamma-1}(\rho_t+u\cdot \nabla \rho) \mathrm{d}x\mathrm{d}t  = \frac{1}{\gamma-1}\int_{0}^{T}\int_{\Omega}\psi_{\tau}^{\prime}\rho^{\gamma} \mathrm{d}x\mathrm{d}t.
$$
Thus, the equation \eqref{eq:boundedlocalenrygy} becomes
\begin{equation}\label{eq:boundedlocalenergy2}
     \int_{0}^{T} \int_{\Omega} \psi_{\tau}^{\prime} \left(\frac{1}{2}\rho |u|^{2} +\frac{1}{\gamma - 1}\rho^{\gamma} \right)\mathrm{d}x\mathrm{d}t   -\int_{0}^{T} \int_{\Omega} \psi_{\tau} (\mu |\nabla u|^{2} + (\mu + \lambda)| \mathrm{div} u|^{2}) \mathrm{d}x\mathrm{d}t = 0.
\end{equation}
Denote $$E(t)=\int_{0}^{T} \int_{\Omega}  \frac{1}{2}\rho |u|^{2} +\frac{1}{\gamma - 1}\rho^{\gamma} \mathrm{d}x\mathrm{d}t,\quad D(t)=\int_{0}^{t} \int_{\Omega}  \mu |\nabla u|^{2} + (\mu + \lambda)| \mathrm{div} u|^{2} \mathrm{d}x\mathrm{d}t.$$
Rewriting \eqref{eq:boundedlocalenergy2} using the definitions of $E(t)$ and $D(t)$, we get
$$
\int_0^T \psi_\tau'(t) E(t) dt + \int_0^T \psi_\tau(t) D'(t) dt = 0.
$$
Since this holds for any $\psi_\tau \in C_0^1((0,T))$, we conclude that 
\begin{equation}\label{eq:energyequalitydistribution}
    (E+D)'=0 \text{ in } \mathcal{D}'((0,T)).
\end{equation}

For the case of a \textbf{periodic domain $\Omega=\mathbb{T}^{N}$}, by choosing $\varphi=\psi_{\tau}$, using Theorem \ref{localenergy} and repeating the above proof  yields the same equality \eqref{eq:energyequalitydistribution}.

To finish all the proof, it suffices to establish the energy equality up to the initial time t = 0 by the similar method in \cite{yu2017energy}.
The one difference is to show the continuity of $\sqrt{\rho} u$ in the strong topology at $t=0$.

It is easy to see, for any $\alpha \geq \frac{1}{2}$,
\begin{equation*}
	\partial_{t}(\rho^{\alpha}) = -\alpha \rho^{\alpha} \mathrm{div} u - 2\alpha \rho^{\alpha - \frac{1}{2}} u \cdot \nabla \sqrt{\rho},
\end{equation*}
which, together with \eqref{con:defcomNS} and \eqref{condi1-th2}, implies
\begin{equation*}
	\rho^{\alpha} \in L^{\infty}(0, T; W^{1,\frac{3}{2}}(\Omega)), \quad \partial_{t}(\rho^{\alpha}) \in L^{2}(0, T; L^{\frac{12}{11}}(\Omega)).
\end{equation*}
It follows that, by the Lemma \ref{aubinlions},
\begin{equation*}
	\rho^{\alpha} \in C([0, T]; L^{r}(\Omega)), \quad r < \frac{3N}{2N-3},
\end{equation*}
so we can use  $u_{0} \in L^{\frac{6N}{6-N}}$ to deduce that
\begin{equation}\label{continuity_sqrt_rho_u}
	(\sqrt{\rho }u) (t) \rightarrow (\sqrt{\rho }u) (0) \quad \text{strongly in } L^2(\Omega) \text{ as } t \to 0^+.
\end{equation}
Following the similar manner of proof in \cite{yu2017energy}, we can complete the proof of Theorem \ref{theoremcomNS}.

\section{The energy conservation for the compressible Navier-Stokes equations with degenerate viscosity}\label{sec:deg}
Next, we consider the compressible Navier-Stokes equations with degenerate viscosity, where the density is strictly bounded away from vacuum. The systems are stated as follows:
	\begin{equation}\label{eq:comNSdeg}
		\begin{split}
			(\rho u)_t + \text{div}(\rho u \otimes u) - 2\nu \text{div}(\rho \mathbb{D} u) + \nabla P &= 0,  \\
			\rho_t + \text{div}(\rho u) &= 0,
		\end{split}
	\end{equation}
    where $\mathbb{D} u = \frac{1}{2}(\nabla u + \nabla^T u)$ is the strain tensor and the viscosity coefficients satisfy $\nu > 0$. Then, we present the definition and the energy conservation criteria of the  corresponding weak solutions.
    \begin{definition}\label{defcomNSdeg}
    The pair $(\rho, u)$ is called a global weak solution to the degenerate viscosity system \eqref{eq:comNSdeg} with initial data \eqref{con:initial} if, for any $t \in[0, T]$:
    \begin{itemize}
        \item The momentum equations \eqref{eq:comNSdeg} hold in $\mathcal{D}^{\prime}((0, T) \times \Omega)$ satisfying:
        $$
        \begin{gathered}
            \rho \geq 0, \quad \rho \in L^{\infty}\left(0, T ; L^\gamma(\Omega)\right),\quad \nabla \sqrt{\rho} \in L^{\infty}\left(0, T ; L^\frac{3}{2}(\Omega)\right)\\
            \sqrt{\rho} u \in L^{\infty}\left(0, T ; L^2(\Omega)\right), \quad \sqrt{\rho} \nabla u \in L^2\left(0, T ; L^2(\Omega)\right);
        \end{gathered}
        $$
        
        \item The initial conditions \eqref{con:initial} hold in $\mathcal{D}^{\prime}(\Omega)$;

        \item $(\rho, u)$ is a renormalized solution of the continuity equation in the sense of DiPerna-Lions \cite{diperna1989ordinary};
        
        \item The energy inequality holds for almost every $t \in [0,T]$:
        \begin{equation}\label{eq:energy_comNSdeg}
            \int_{\Omega}\left(\frac{1}{2} \rho|u|^2+\frac{\rho^\gamma}{\gamma-1}\right) \mathrm{d} x+\int_0^t \int_{\Omega} \rho|\mathbb{D} u|^2 \mathrm{d} x \mathrm{d} s \leq \int_{\Omega}\left(\frac{1}{2} \rho_0\left|u_0\right|^2+\frac{\rho_0^\gamma}{\gamma-1}\right) \mathrm{d} x.
        \end{equation}
    \end{itemize}
\end{definition}
\begin{remark}
	The condition $\nabla\sqrt{\rho}\in L^{\infty}L^{\frac{3}{2}}$ is reasonable in this context, as it is consistent with the known existence theory for weak solutions. Specifically, for the degenerate compressible Navier–Stokes equations, global existence results \cite{vasseur2016existence} ensure that $\nabla \rho^{\frac{\gamma}{2}}\in L^{2}L^{2}$ and $\nabla\sqrt{\rho}\in L^{\infty}L^{2}$.
\end{remark}

\begin{theorem}\label{theoremcomNSdeg}
    Let $(\rho, u)$ be a weak solution of \eqref{eq:comNSdeg} in the sense of Definition \ref{defcomNSdeg}. Assume that the density satisfies
    \begin{equation}\label{condi1-th2}
        0 < \underline{\rho} \leq \rho(t, x)\leq \bar{\rho} < \infty.
    \end{equation}
     and the initial condition satisfies
    \begin{equation}\label{condi2-th2}
        \sqrt{\rho_0} u_0 \in L^{\frac{4N}{N+2}}(\Omega).
    \end{equation}
    If the velocity satisfies
    \begin{equation}\label{con:shinbrot_cond2}
        u \in L^p\left(0, T ; L^q(\Omega)\right) \quad \text{with} \quad \begin{cases}
            \frac{1}{p}+\frac{3}{q} \leq 1,\quad\mbox{if}  \quad 3\leq q < 4,\\
            \frac{2}{p}+\frac{2}{q} \leq 1,\quad\mbox{if}  \quad 4\leq q \leq\infty,
        \end{cases}
    \end{equation}
    then the energy equality \eqref{eq:energy_comNSdeg} holds for any $t \in[0, T]$.
\end{theorem}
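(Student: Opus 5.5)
We follow the same two-step scheme as for Theorem \ref{theoremcomNS}: first a local energy equality in $\mathcal{D}'$, then a globalization in space and time. The advantage here is \eqref{condi1-th2}. Because $\underline{\rho}\le\rho\le\bar\rho$, the bounds in Definition \ref{defcomNSdeg} give $u\in L^\infty(0,T;L^2)$ and $\nabla u\in L^2(0,T;L^2)$, hence $u\in L^2(0,T;H^1)$. We also get $\nabla\rho=2\sqrt\rho\nabla\sqrt\rho\in L^\infty(0,T;L^{3/2})$. So the velocity has exactly the regularity used in Theorem \ref{localenergy}.

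\emph{Step 1: local equality.} As before, it suffices to treat $2<p\le4$, since the remaining range embeds into $L^4L^4$ via $L^pL^q\cap L^2H^1$. We test \eqref{eq:comNSdeg} with $(\varphi u_\varepsilon^\varepsilon)_\varepsilon^\varepsilon$ for $\varphi\in C_c^\infty((0,T)\times\Omega)$. The inertial terms are handled verbatim as in Theorem \ref{localenergy}.
\begin{itemize}
\item The time commutator splits into $M_1+M_2$. $M_2$ vanishes by Lemma \ref{commutator} with $f=\rho$, $g=u^\varepsilon$, using $\partial_t\rho=-\mathrm{div}(\rho u)\in L^rW^{-1,s}$ obtained by interpolating $\rho u\in L^\infty L^2\cap L^pL^q$. $M_1$ vanishes by the $\nabla\rho\in L^\infty L^{3/2}$, $u\in L^2L^6$ bound together with the density argument.
\item The convective commutator $I_2$ vanishes under $\frac1p+\frac1q\le\frac12$.
\item The pressure term now uses $\nabla\rho^\gamma=\gamma\rho^{\gamma-1}\nabla\rho\in L^\infty L^{3/2}$.
\end{itemize}
The only new term is the viscous one, $2\nu\int\varphi u_\varepsilon^\varepsilon\cdot\mathrm{div}(\rho\mathbb{D}u)_\varepsilon^\varepsilon$. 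After integrating by parts, it equals $-2\nu\int\nabla(\varphi u_\varepsilon^\varepsilon):(\rho\mathbb{D}u)_\varepsilon^\varepsilon$. Since $\rho\mathbb{D}u\in L^2L^2$, we have $(\rho\mathbb{D}u)_\varepsilon^\varepsilon\to\rho\mathbb{D}u$ strongly in $L^2L^2$. Also $\nabla u_\varepsilon^\varepsilon\to\nabla u$ and $u_\varepsilon^\varepsilon\to u$ strongly in $L^2L^2$. The term therefore converges to $-2\nu\int\varphi\rho|\mathbb{D}u|^2+\nabla\varphi\cdot(\rho\mathbb{D}u\,u)$, using symmetry of $\mathbb{D}u$. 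No commutator is needed here, which is simpler than in the constant-viscosity case. The result is \eqref{equ-local energy}, with the viscous part replaced by $2\nu\rho|\mathbb{D}u|^2$ and flux $2\nu\rho\mathbb{D}u\,u$.

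\emph{Step 2: globalization.} We take $\varphi=\psi_\tau\phi_\delta$ (or $\varphi=\psi_\tau$ on $\mathbb{T}^N$) with the cut-offs of Section 4. The boundary terms carry $\nabla\phi_\delta$, which is supported in the strip $\Omega\setminus\Omega^\delta$ and satisfies $|\nabla\phi_\delta|\le 2/\mathrm{dist}(x,\partial\Omega)$. We control them by the Hardy-type lemma applied to $u\in L^2W^{1,2}_0$:
\begin{itemize}
\item the viscous flux is bounded by $\bar\rho\|\nabla u\|_{L^2L^2}\,\|u/\mathrm{dist}\|_{L^2L^2(\Omega\setminus\Omega^\delta)}\to0$;
\item the cubic term $\rho u|u|^2$ and the pressure flux $\rho^\gamma u$ are handled as in Section 4, by interpolating $u\in L^\infty L^2\cap L^pL^q$.
\end{itemize}
Writing $\rho^\gamma\mathrm{div}u$ via the renormalized continuity equation then gives $(E+D)'=0$ in $\mathcal{D}'((0,T))$.

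\emph{Step 3: initial time.} We need $\sqrt\rho u(t)\to\sqrt{\rho_0}u_0$ strongly in $L^2$ as $t\to0^+$, and we expect this to be the main obstacle. The plan is:
\begin{enumerate}
\item Weak continuity of $\rho u$ follows from the momentum equation; combined with continuity of $\rho$ it gives $\sqrt\rho u(t)\rightharpoonup\sqrt{\rho_0}u_0$ weakly in $L^2$.
\item The energy inequality gives $\limsup_{t\to0}E(t)\le E(0)$. Because $\rho\in C([0,T];L^r)$, the potential energy converges, so $\limsup_{t\to0}\|\sqrt\rho u(t)\|_{L^2}\le\|\sqrt{\rho_0}u_0\|_{L^2}$, and weak convergence upgrades to strong.
\end{enumerate}
For $\rho\in C([0,T];L^r)$ we use Lemma \ref{aubinlions} with $\rho\in L^\infty W^{1,3/2}$ and $\partial_t\rho\in L^2L^{12/11}$.

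The delicate point is matching the initial-datum identification in $\mathcal{D}'$ with this product limit. This is where \eqref{condi2-th2} enters: $\sqrt{\rho_0}u_0\in L^{4N/(N+2)}$ is needed so that $\rho_0u_0\cdot(\cdot)$ pairs with the $C([0,T];L^r)$ compactness of $\rho$, as in \cite{yu2017energy}. Having the energy equality on $(0,T)$ and continuity at $t=0$, we conclude \eqref{eq:energy_comNSdeg} with equality for every $t\in[0,T]$.
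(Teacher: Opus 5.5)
Your scheme matches the paper's. You test with $(\varphi u_\varepsilon^\varepsilon)_\varepsilon^\varepsilon$, split the time commutator into $M_1+M_2$ and use Lemma \ref{commutator} for $M_2$, globalize with $\psi_\tau\phi_\delta$, and close at $t=0$ through strong continuity of $\sqrt\rho u$. Your direct treatment of the viscous term is correct and more explicit than the paper's appeal to \cite{yu2017energy}: strong $L^2L^2$ convergence of $(\rho\mathbb{D}u)_\varepsilon^\varepsilon$ against $\nabla(\varphi u_\varepsilon^\varepsilon)$, with no commutator. The same holds for your Hardy bound on the boundary fluxes. There is, however, one admissible case you do not cover.

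\textbf{The missing endpoint.} The statement allows $q=\infty$ in the branch $\frac2p+\frac2q\le 1$, i.e.\ the endpoint $u\in L^2(0,T;L^\infty(\Omega))$. This is exactly what distinguishes Theorem \ref{theoremcomNSdeg} from Theorem \ref{theoremcomNS}. Your reduction ``it suffices to treat $2<p\le 4$'' leaves it out, for two reasons.
\begin{itemize}
\item $L^2L^\infty\cap L^2H^1$ gives no gain in time integrability (take $u=a(t)v(x)$ with $a\in L^2\setminus L^4$), so it does not embed into $L^4L^4$.
\item The Section 3 argument cannot be run at $p=2$. It needs $\|u-u_\varepsilon^\varepsilon\|_{L^pL^q}\to 0$ in $I_2$ and an approximating sequence $u_n\to u$ in $L^pL^q$ in the density argument for $M_1$. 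Both fail in $L^2L^\infty$; this is the non-separability obstruction noted after Theorem \ref{theoremcomNS}.
\end{itemize}

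\textbf{The repair.} Use the bound $u\in L^\infty(0,T;L^2)$, which you derived from $\rho\ge\underline\rho$ but never used in Step 1:
\[
\int_0^T\|u\|_{L^4}^4\,\mathrm{d}t\le \|u\|_{L^\infty L^2}^2\,\|u\|_{L^2L^\infty}^2 .
\]
More generally, for $\frac2p+\frac2q=1$ with $q\ge 4$, the interpolation $\|u\|_{L^4}\le\|u\|_{L^2}^{1-p/4}\|u\|_{L^q}^{p/4}$ gives $L^pL^q\cap L^\infty L^2\hookrightarrow L^4L^4$. Combined with your $L^2H^1$ embedding for $3\le q<4$, every admissible pair reduces to $(p,q)=(4,4)$. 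There $C_c^\infty$ is dense and the Section 3 estimates apply. This is how the paper proceeds, and it is precisely where the extra control from the lower density bound buys the endpoint that Theorem \ref{theoremcomNS} cannot reach.

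\textbf{Minor remark on Step 3.} Your weak-convergence-plus-norm argument is sound as structured. With $\rho\ge\underline\rho$, the weak limit is identified directly from $\sqrt\rho u=\rho^{-1/2}(\rho u)$: here $\rho u\in C([0,T];L^2_{weak})$, and $\rho^{-1/2}$ is bounded and strongly continuous in every $L^r$, $r<\infty$. So your argument closes without \eqref{condi2-th2}, and your closing appeal to it (the pairing of $\rho_0u_0$ with the compactness of $\rho$) is not what makes the argument work.
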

\begin{remark}
    Different from Theorem \ref{theoremcomNS}, we have obtained the energy conservation under the assumption $u\in L^{2}L^{\infty}$, which is one of the endpoints. 
\end{remark}
Finally, let us provide the proof of Theorem \ref{theoremcomNSdeg}. We can modify the proof in Section 3 and 4 slightly to arrive the case $(p,q)=(4,4)$. Note that, for any weak solution ($\rho, u$), condition \eqref{condi1-th2} implies that

$$
\|u\|_{L^{\infty}\left(0, T ; L^2(\Omega)\right)} \leq C<\infty, \quad\|\nabla u\|_{L^2\left(0, T ; L^2(\Omega)\right)} \leq C<\infty .
$$
The one difference is to show that 
\begin{equation}\label{eq:degeneratedifussion}
	 \lim_{\varepsilon \to 0} \int_0^T \int_\Omega \varphi  u_{\varepsilon}^\varepsilon \operatorname{div}(\rho \mathbb{D} u)_{\varepsilon}^{\varepsilon} \mathrm{d}x\mathrm{d}t 
	 = -\int_0^T \int_{\Omega} \varphi \rho|\mathbb{D} u|^2 + \rho u\otimes \nabla\varphi:\mathbb{D} u\  \mathrm{d}x\mathrm{d}t.
\end{equation}
For this term, we follow the similar manner of proof in \cite{yu2017energy} to prove it.
The another difference is to show  the continuity of $\sqrt{\rho} u$ in the strong topology at $t=0$. 
Applying a similar argument in Section 4, we can show
$$
\sqrt{\rho} \in C\left([0, T] ; L^r(\Omega)\right),\quad (r<\frac{2N}{N-2}),
$$
so we need $\sqrt{\rho_{0}}u_0 \in L^k$ where $k>\frac{4N}{N+2}$ or $u_{0} \in L^{m}$ with $m > N$ to get \eqref{continuity_sqrt_rho_u}.

As for $p\neq 4$, we have embedding as follows,
$$
L^{p}(0,T;L^{q}(\Omega))\cap L^{2}(0,T;H^{1}(\Omega))\hookrightarrow L^{4}(0,T;L^{4}(\Omega)),\quad\mbox{where}\quad \frac{1}{p}+\frac{3}{q}=1,
$$
for $4<p\leq\infty$, and
$$
L^{p}(0,T;L^{q}(\Omega))\cap L^{\infty}(0,T;L^{2}(\Omega))\hookrightarrow L^{4}(0,T;L^{4}(\Omega)),\quad\mbox{where}\quad \frac{2}{p}+\frac{2}{q}=1,
$$
for $4<p\leq\infty$, which finish the proof.

\section*{Acknowledgments}
The corresponding author is supported by the Fundamental Research Funds for the Central Universities, Project No.3072025CFJ2406. The authors would like to thank the referee for their careful reading and useful suggestions and comments.

\section*{Declarations}

\textbf{Data availability} We do not analyse or generate any datasets, because our work proceeds within a theoretical approach.

\noindent\textbf{Conflict of interest} On behalf of all authors, the corresponding author states that there is no conflict of interest.

\bibliography{reference}

\end{document}